\newcommand*{\textlabel}[2]{%
  \edef\@currentlabel{#1}
  \phantomsection
  #1\label{#2}
}
\newtheoremstyle{custom}
  {3pt}
  {3pt}
  {\slshape}
  {}
  {\bfseries}
  {.}
  { }
   {}
\theoremstyle{custom}
\newtheorem{theorem}{Theorem}[section]
\newtheorem{proposition}[theorem]{Proposition}
\newtheorem{proposition/definition}[theorem]{Proposition/Definition}
\newtheorem{lemma}[theorem]{Lemma}
\newtheorem{corollary}[theorem]{Corollary}
\theoremstyle{definition}
\newtheorem{definition}[theorem]{Definition}
\newtheorem{example}[theorem]{Example}
\theoremstyle{remark}
\newtheorem{remark}[theorem]{Remark}
\newtheoremstyle{exercise}
  {3pt}
  {6pt}
  {}
  {}
  {\bfseries}
  {:}
  { }
   {}
\theoremstyle{exercise}
\newtheorem{exercise}[theorem]{Exercise}
\newtheoremstyle{exercises}
  {3pt}
  {6pt}
  {}
  {}
  {\bfseries}
  {:}
  {\newline}
   {}
\theoremstyle{exercise}
\newtheorem{exercises}[theorem]{Exercises}
\def\boxit#1{\vbox{\hrule height1pt\hbox{\vrule width1pt\kern3pt
  \vbox{\kern3pt#1\kern3pt}\kern3pt\vrule width1pt}\hrule height1pt}}
\def\11{\mathbf 1}
\def\s{\sigma}
\def\ot{{\mathord{ \otimes } }}
\def\ra{{\mathord{\;\rightarrow\;}}}
\def\dim{{\rm dim}\;}
\def\s{\sigma}
\def\ra{\rightarrow}
\def\be{\begin{equation}}
\def\ene{\end{equation}}
\DeclareMathOperator{\tlog}{log}
\def\rank{\operatorname{rank}}
\DeclareMathOperator*{\perm}{perm}
\DeclareMathOperator*{\spa}{span}
\def\dim{{\rm dim }}
\def\NUM{{\rm NUM }}
\begin{document}

\title{Flattenings and Koszul Young flattenings arising in complexity theory }
\author{Yonghui Guan}
 \begin{abstract}
I find new equations for Chow varieties, their secant varieties, and an additional variety that arises in the study of complexity theory by flattenings and  Koszul Young flattenings. This enables a new lower bound for symmetric border rank of $x_1x_2\cdots x_d$ when $d$ is odd, and a new lower complexity bound for the permanent.
 \end{abstract}
 \email{yonghuig@math.tamu.edu }
\keywords{Chow Variety, secant variety, flattenings, Koszul Young flattenings, symmetric border rank, permanent}
\maketitle
\section{Introduction}
\subsection{Motivation from algebraic geometry}
There has been substantial recent interest in the equations of certain algebraic varieties that encode
natural properties of polynomials (see e.g., \cite{MR2310544,MR3169697,LMsec,MR3081636,LWsecseg}). Such varieties are usually preserved by algebraic
groups, this paper studies equations for several such varieties. One variety of interest
is the {\it Chow variety} of polynomials that decompose as a product of linear forms, which is defined by
$Ch_d(V)=\mathbb{P}\{z\in S^dV|z=w_1\cdots w_d\,{\rm\ for\ some\ } w_i \in V\}\subset\mathbb{P}S^dV,$
where $V$ is a finite-dimensional complex vector space and  $\mathbb{P}S^dV$ is the projective space of homogeneous polynomials of degree $d$ on the dual space $V^*$.

The ideal of the Chow variety of polynomials that decompose as a product of linear forms has been studied for over 100 years, dating back at least to  Gordon and Hadamard. Let $S^\delta(S^dV)$ denote the space of homogeneous polynomials of degree $\delta$ on $S^dV^*$. The {\it Foulkes-Howe} map $h_{\delta,d}:S^\delta(S^dV)\rightarrow S^d(S^\delta V)$ was defined by Hermite \cite{hermite} when $\dim\ V=2$, and Hermite proved the map is an isomorphism in his celebrated \lq\lq Hermite reciprocity\rq\rq. Hadamard \cite{MR1554881} defined the map in
general and observed that its kernel is $I_\delta(Ch_d(V^*))$, the degree $\delta$ component of the ideal
of the Chow variety. We do not understand this map when $d>4$ (see \cite{MR1243152,MR1601139,MR0037276,MR1504330,MR983608,MR1651092}).

Brill and Gordon (see \cite{MR2664658,gkz,Gordon,MR2865915}) wrote down set-theoretic equations
for the Chow variety of degree $d+1$,  called \lq\lq Brill's equations\rq\rq. Brill's equations give a geometric derivation of set-theoretic equations for the Chow variety. I computed Brill's equations in terms of a $GL(V)$-module from a representation-theoretic perspective in \cite{2015arXiv150802293G}, where $GL(V)$  denotes the {\it general linear group} of invertible linear maps from $V$ to $V$.

 Define the {\it Veronese variety}
$v_d(\mathbb{P}V)\subset \mathbb{P}S^dV$ by
\begin{eqnarray*}
v_d(\mathbb{P}V)=\mathbb{P}\{z\in S^dV|z=w^d\ {\rm for\ some}\ w \in V\}.
\end{eqnarray*}
Let $W$ be a complex vector space and let $X\subset \mathbb{P}{W^*}$ be an algebraic variety. Define
$\sigma^0_r(X)={\bigcup_{p_1,\dots,p_r\in X}\langle p_1,\dots,p_r\rangle} \subset \mathbb{P}W^*,$
where $\langle p_1,\dots,p_r\rangle$ denotes the  projective plane spanned by $p_1,\dots,p_r$.
Define the $r$-th {\it secant variety} of $X$ to be
$\sigma_r(X)=\overline{\sigma^0_r(X)} \subset \mathbb{P}W^*,$ where the overline denotes closure in the Zariski topology.

For a given polynomial $P\in S^dV$,
the {\it symmetric rank} $\mathbf{R}_S(P)$ of $P$ is the smallest $r$ such that $[P]\in \sigma^0_r(v_d(\mathbb{P}V))$,
the {\it symmetric border rank} $\underline{\mathbf{R}}_S(P)$ of $P$ is the smallest $r$ such that $[P]\in \sigma_r(v_d(\mathbb{P}V))$.
Notice that $\mathbf{R}_S(P)\geq\underline{\mathbf{R}}_S(P)$. It is an open question to determine the symmetric border rank of $x_1\cdots x _d\in S^dV$.

\subsection{Motivation from complexity theory}



Leslie Valiant \cite{vali:79-3} defined in 1979 an algebraic analogue of the ${\mathbf{P}}$ versus ${\mathbf{NP}}$ problem.
The class ${\mathbf{VP}}$ is an algebraic analogue of the class ${\mathbf{P}}$, and the class ${\mathbf{VNP}}$ is an algebraic analog of the class ${\mathbf{VP}}$. Valiant Conjectured $\mathbf{VP\neq VNP}$.
 Valiant's Conjecture $\mathbf{VP\neq VNP}$ \cite{vali:79-3} may be rephrased as  \lq\lq there does not exist polynomial size circuit that computes the permanent\rq\rq, defined by ${\perm}_n=\sum_{\sigma\in\mathfrak{S_n}}x_{1\sigma(1)}x_{2\sigma(2)}\cdots x_{n\sigma(n)}\in S^n{\mathbb{C}^{n^2}}$, where $\mathfrak{S_n}$ is the symmetric group and $\mathbb{C}^{n^2}$ has a basis $\{x_{ij}\}_{1\leq i,j\leq n}$.

A geometric method to approach Valiant's conjecture implicitly proposed by Gupta, Kamath, Kayal and Saptharishi \cite{DBLP:journals/eccc/GuptaKKS13} is to determine equations for the secant varieties defined in Theorems \ref{chowvnp} and \ref {con} below.

Let $h_n$ and $g_n$ be two positive sequences, define $h_n=\omega(g_n)$ if $\lim_{n\rightarrow\infty}\frac{h_n}{g_n}=\infty$,
define $h_n=\Omega(g_n)$ if $\lim_{n\rightarrow\infty}\frac{h_n}{g_n}\geq C$ for some positive constant $C$.
Define the {\it padded permanent} to be $l^{n-m}{\perm}_m \in S^n\mathbb{C}^{m^2+1}$, where $\mathbb{C}^{m^2+1}$ has a
basis $\{l, x_{ij}\}_{1\leq i,j\leq m}$.
The following two theorems appeared in \cite{MR3343444}, they are
geometric rephrasings of results in \cite{DBLP:journals/eccc/GuptaKKS13}.
\begin{theorem}{\rm \cite{DBLP:journals/eccc/GuptaKKS13,MR3343444}}\label{chowvnp}  If for all but a finite number of $m$, for all $r,n$ with $rn<2^{\omega(\sqrt{m}\log(m))}$,
\begin{eqnarray*}
[l^{n-m}{\perm}_m]\not\in \sigma_r(Ch_{n}(\mathbb{C}^{m^2+1})),
\end{eqnarray*}
then Valiant's Conjecture $\mathbf{VP\neq VNP}$ {\rm \cite{vali:79-3}} holds.
\end{theorem}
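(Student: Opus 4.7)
The plan is to establish the contrapositive: assume Valiant's conjecture fails, i.e.\ $\mathbf{VP}=\mathbf{VNP}$, and deduce that there exist infinitely many $m$ and some $r,n$ with $rn<2^{\omega(\sqrt m\,\log m)}$ for which $[l^{n-m}\perm_m]\in\sigma_r(Ch_n(\mathbb{C}^{m^2+1}))$. All the real work is supplied by an existing depth-reduction theorem from complexity theory; the contribution of the theorem is to repackage that reduction geometrically, so the proof is essentially a dictionary between two languages.

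The key input is the depth-reduction theorem of Gupta--Kamath--Kayal--Saptharishi (and its homogenization by Tavenas): if a degree-$n$ polynomial $P$ in $N$ variables is computed by an arithmetic circuit of size $s$, then $P$ admits an approximation (in the Zariski closure) by a homogeneous $\Sigma\Pi\Sigma\Pi$ expression
\[
P \;=\; \lim_{\epsilon\to 0}\ \sum_{i=1}^{r}\prod_{j=1}^{a}Q_{ij}(\epsilon),
\]
with $a=O(\sqrt n)$, each $Q_{ij}$ of degree $O(\sqrt n)$ in the original variables, and $r\le s^{O(\sqrt n)}$. First I would apply this to the padded permanent: under the assumption $\mathbf{VP}=\mathbf{VNP}$, the polynomial $l^{n-m}\perm_m\in S^n\mathbb C^{m^2+1}$ is computable by a circuit of size polynomial in $m$ (choose $n$ polynomial in $m$), hence admits such a bounded depth-4 approximation.

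Next I would convert sums of products of low-degree polynomials into sums of products of linear forms, which is what the Chow variety sees. The standard device is Fischer's symmetric-to-product identity (or the classical substitution trick), which writes a degree-$k$ product $Q_1\cdots Q_a$ as a limit of sums of $2^{k-1}$ products of linear forms. Applied factor by factor, this replaces each inner product $\prod_{j=1}^{a}Q_{ij}$ by a limit of sums of products of $n$ linear forms, inflating the top fan-in $r$ by a factor of $2^{O(\sqrt n)}$. The resulting expression exhibits $l^{n-m}\perm_m$ as a point in $\sigma_r^0(Ch_n(\mathbb C^{m^2+1}))$, and taking the Zariski closure absorbs the $\epsilon\to 0$ limit, placing $[l^{n-m}\perm_m]$ in $\sigma_r(Ch_n(\mathbb{C}^{m^2+1}))$. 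A careful accounting shows the final parameters satisfy $rn\le 2^{O(\sqrt m\,\log m)}$, which violates the hypothesis of the theorem.

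The main obstacle, and the step I would be most careful about, is the parameter bookkeeping through depth reduction combined with Fischer's expansion: one must verify that padding by $l^{n-m}$ (choosing the correct $n$ polynomial in $m$), then shrinking the bottom fan-in from $O(\sqrt n)$ to $1$ via the substitution trick, together inflate $r$ only to $2^{O(\sqrt m\,\log m)}$ and leave $n$ polynomial in $m$, so that $rn$ remains within the asymptotic regime excluded by the hypothesis. The second delicate point is keeping the border/limit aspect consistent: the depth-reduction output is generally only an approximation, which is exactly why the statement uses the Zariski-closed secant variety $\sigma_r$ rather than the open locus $\sigma^0_r$.
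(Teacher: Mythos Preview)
The paper does not give its own proof of this theorem: it is stated with citations to \cite{DBLP:journals/eccc/GuptaKKS13} and \cite{MR3343444} and introduced as a ``geometric rephrasing of results'' from those works, serving purely as motivation. So there is no in-paper argument to compare against; your outline is in fact a sketch of the argument that lives in the cited references, and in spirit it is correct: assume $\mathbf{VP}=\mathbf{VNP}$, invoke the GKKS/Tavenas depth reduction to obtain a small $\Sigma\Pi\Sigma$ (or $\Sigma\Pi\Sigma\Pi$ followed by a further reduction) circuit for the padded permanent, and read off membership in $\sigma_r(Ch_n(\mathbb{C}^{m^2+1}))$ with the stated size bound.

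Two small corrections to your write-up. First, the depth-reduction theorems of GKKS and Tavenas produce an \emph{exact} circuit, not an approximation; there is no $\epsilon\to 0$ limit in that step, so your remark that ``the depth-reduction output is generally only an approximation'' is not accurate. The use of the closed secant variety $\sigma_r$ rather than $\sigma_r^0$ is harmless here (the exact expression already lands in $\sigma_r^0\subset\sigma_r$), not a necessity forced by border phenomena. Second, the passage from $\Sigma\Pi\Sigma\Pi$ to $\Sigma\Pi\Sigma$ in the cited work is done via Newton/Fischer--Ryser-type identities that express each low-degree factor as a sum of powers of linear forms (hence each inner product becomes a sum of products of linear forms); this is again an exact algebraic identity, not a limit. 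With these adjustments your bookkeeping goes through and yields $rn\le 2^{O(\sqrt m\,\log m)}$, contradicting the hypothesis.
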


\begin{theorem}\label{con}{\rm \cite{DBLP:journals/eccc/GuptaKKS13,MR3343444}}
If for all but finite number of $n$, and for $\delta_1,\delta_2\sim\sqrt{n}$, for $r$,$\rho$ with $r\rho<2^{\omega({\sqrt{n}\tlog(n)})}$,
\begin{eqnarray*}
[{\perm}_n]\not\in \sigma_\rho(v_{\delta_1}(\sigma_r(v_{\delta_2}(\mathbb{P}^{n^2-1})))),
\end{eqnarray*}
then Valiant's conjecture $\mathbf{VP\neq VNP}$ {\rm \cite{vali:79-3}} holds.
\end{theorem}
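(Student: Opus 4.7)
My plan is to prove the contrapositive of the implication: assuming $\perm_n$ admits arithmetic circuits of size $n^{O(1)}$ (the failure of $\mathbf{VP}\neq\mathbf{VNP}$), I will exhibit $[\perm_n]$ explicitly as a point of $\sigma_\rho(v_{\delta_1}(\sigma_r(v_{\delta_2}(\mathbb{P}^{n^2-1}))))$ with $r\rho=2^{O(\sqrt{n}\log n)}$, which lies below the threshold $2^{\omega(\sqrt{n}\log n)}$ and thereby contradicts the assumed non-membership.

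First I would invoke the depth-reduction results of Agrawal--Vinay, Koiran, and Tavenas, as packaged in \cite{DBLP:journals/eccc/GuptaKKS13}, to rewrite a hypothetical polynomial-size circuit for $\perm_n$ as a homogeneous $\Sigma\Pi\Sigma\Pi$ expression
$$\perm_n=\sum_{i=1}^{N}\prod_{j=1}^{\delta_1}Q_{ij},\qquad Q_{ij}=\sum_{k=1}^{t}\prod_{\ell=1}^{\delta_2}L_{ijk\ell},$$
where each $L_{ijk\ell}$ is linear in the matrix entries, $\delta_1\delta_2=n$, $\delta_1,\delta_2\sim\sqrt{n}$, and $N,t\le 2^{O(\sqrt{n}\log n)}$.

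The second step converts products to powers of linear forms via Fischer's identity
$$x_1\cdots x_d=\frac{1}{d!\,2^d}\sum_{\varepsilon\in\{\pm1\}^d}\varepsilon_1\cdots\varepsilon_d(\varepsilon_1x_1+\cdots+\varepsilon_dx_d)^d.$$
Applied to each inner monomial $\prod_\ell L_{ijk\ell}$, this writes the product as a sum of $2^{\delta_2}$ $\delta_2$-th powers of linear forms, so $[Q_{ij}]\in\sigma_r(v_{\delta_2}(\mathbb{P}^{n^2-1}))$ with $r=t\cdot 2^{O(\sqrt{n})}=2^{O(\sqrt{n}\log n)}$. Applying Fischer's identity a second time to each outer product $\prod_{j=1}^{\delta_1}Q_{ij}$ realises it as a sum of $2^{O(\sqrt{n})}$ $\delta_1$-th powers of linear combinations of the $Q_{ij}$; since a linear combination of $r$-secant points of $v_{\delta_2}(\mathbb{P}^{n^2-1})$ is again a secant point after a controlled inflation of $r$, summing over the outer indices $i$ and over the sign vectors yields the desired membership $[\perm_n]\in\sigma_\rho(v_{\delta_1}(\sigma_{r'}(v_{\delta_2}(\mathbb{P}^{n^2-1}))))$ with $r'\rho=2^{O(\sqrt{n}\log n)}$, contradicting the hypothesis.

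The main obstacle is the depth-reduction step. A straightforward Agrawal--Vinay reduction does not achieve the balanced regime $\delta_1,\delta_2\sim\sqrt{n}$ with total size $2^{O(\sqrt{n}\log n)}$; the quantitative sharpening of Tavenas, known to be essentially tight for bounded-depth simulations of general arithmetic circuits, is what allows the two powering levels of the $\Sigma\wedge\Sigma\wedge\Sigma$ representation to be balanced simultaneously. By contrast, the passage between $\sigma^0_r$ and $\sigma_r$ is painless here: Fischer's identity is an \emph{exact} algebraic identity, so the construction actually lands inside the open secant $\sigma^0$, and no limiting argument is required---all the complexity-theoretic content is concentrated in the depth-reduction estimate.
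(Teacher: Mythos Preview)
The paper does not supply its own proof of this theorem; it is quoted, with citations to \cite{DBLP:journals/eccc/GuptaKKS13} and \cite{MR3343444}, purely as background motivation, and no argument for it appears anywhere in the text. So there is nothing in the paper to compare your attempt against.

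That said, your sketch reproduces the argument underlying those references and is correct in outline: Tavenas's sharpening of the Agrawal--Vinay/Koiran depth reduction converts a hypothetical polynomial-size circuit for $\perm_n$ into a homogeneous $\Sigma\Pi^{[\delta_1]}\Sigma\Pi^{[\delta_2]}$ circuit of size $2^{O(\sqrt{n}\log n)}$ with $\delta_1,\delta_2\sim\sqrt{n}$, and two rounds of Fischer's identity (each costing a multiplicative factor $2^{O(\sqrt{n})}$) replace the product gates by powering gates, producing an \emph{exact} $\Sigma\wedge^{[\delta_1]}\Sigma\wedge^{[\delta_2]}\Sigma$ expression with $r\rho=2^{O(\sqrt{n}\log n)}$. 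The inflation you flag---a linear combination of $\delta_1$ points of $\sigma_r(v_{\delta_2}(\mathbb{P}V))$ lies in $\sigma_{\delta_1 r}(v_{\delta_2}(\mathbb{P}V))$---is indeed harmless at this scale. One small caveat worth making explicit: the raw depth-reduction output need not have all inner polynomials of the \emph{same} degree $\delta_2$ (only degree at most $\delta_2$) nor all outer products of the same length, so a short homogenization/padding step is needed to force uniform $\delta_1,\delta_2$ with $\delta_1\delta_2=n$; this is routine and is handled in the cited sources.
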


 Theorems \ref{chowvnp} and \ref {con} motivated me to study the varieties $\s_r(Ch_d(V))$ and $\sigma_\rho(v_{\delta_1}(\sigma_r(v_{\delta_2}(\mathbb{P}V)))$.
 The results obtained here are not in the ranges needed to separate $\mathbf{VP}$ from $\mathbf{VNP}$. However, I introduce methods from representation theory for these problems. For the second problem this is the first time that equations for $\sigma_\rho(v_{\delta_1}(\sigma_r(v_{\delta_2}(\mathbb{P}V)))$ are approached from this perspective.

My results include
\begin{itemize}
\item Equations for $Ch_d(\mathbb{C}^d)$ ( Theorem \ref{rankchow}).
\item Equations for $\s_r(Ch_d(\mathbb{C}^{dr}))$ ( Theorem \ref{rankschow}).
\item A lower bound of $\underline{\mathbf{R}}_S(x_1\cdots x _{d})$ when $d$ is odd ( Theorem \ref{chowsrank}).
\item A lower bound on the size of depth 5 circuits that compute $\det_n$ and $\perm_n$ ( Theorem \ref{permcom})
\end{itemize}
\subsection{Flattenings and Koszul Young flattenings}

Equations for the secant varieties of Chow varieties are mostly unknown, and even for the secant varieties of Veronese varieties  very little is known.
One class of equations is  obtained from the so-called {\it flattenings} or {\it catalecticants}, which
date back to Sylvester:  for any $1\leq k\leq d$, there is an inclusion $F_{k,d-k}:S^dV\hookrightarrow S^kV\otimes S^{d-k}V$, called a polarization map. For any $P\in S^dV$, define the {\it $k$-th polarization} $P_{k,d-k}$ of $P$ to be $F_{k,d-k}(P)$. Then $P_{k,d-k}\in S^kV\otimes S^{d-k}V$ can be seen as a linear map $P_{k,d-k}: S^kV^*\rightarrow S^{d-k}V$. The image of $P_{k,d-k}$ is the space spanned by all $k$-th order partial derivatives of $P$, and is studied in the computer science literature under the name the {\it method
of partial derivatives} (see, e.g. \cite{MR2901512} and the references therein).
In coordinates, if $\{x_1,\dots,x_n\}$ is a basis of $V$,
then $\{\frac{\partial^k}{\partial  x_1^{i_1}\cdots \partial x_{n}^{i_{n}}}\}_{i_1+\cdots+i_{n}=k}$ is a basis of $S^kV^*$,
and $P_{k,d-k}(\frac{\partial^k}{\partial  x_1^{i_1}\cdots \partial x_{n}^{i_{n}}})=\frac{\partial^k P}{\partial  x_1^{i_1}\cdots \partial x_{n}^{i_{n}}}$.

If $[P]\in v_d(\mathbb{P}V)$, the rank
of $P_{k,d-k}$ is one, so the size $(r+1)$-minors of $P_{k,d-k}$ are in the ideal of  $I_{r+1}(\sigma_r(v_d(\mathbb{P} V)))$.
If $[P]\in Ch_d(V)$ with $\dim\ V\geq d$, then the rank of $P_{k,d-k}$ is $\binom dk$, so the size $r\binom dk +1$ minors are in the ideal of $\sigma_r(Ch_d(V))$.

Flattenings generalize to {\it Young flattenings}, see \cite{DBLP:journals/corr/EfremenkoLSW15,MR3427655,MR3081636} for a discussion of
Young flattenings and the state of the art. For $P\in S^dV$, the Koszul Young flattening is a linear map $P_{k,d-k}^{\wedge p}:S^kV^*\otimes \Lambda^pV\rightarrow S^{d-k+1}V\otimes \Lambda^{p+1}V$, it is defined by the composition of the following two maps
\begin{eqnarray*}
S^kV^*\otimes \Lambda^pV\rightarrow S^{d-k}V\otimes \Lambda^pV\rightarrow  S^{d-k-1}V\otimes\Lambda^{p+1}V,
\end{eqnarray*}
where the first map is defined by tensoring $P_{k,d-k}$ with the identity map  $Id_{\Lambda^p V}:\Lambda^pV\rightarrow \Lambda^pV$, and the second map $\wedge_{d-k,p}:S^{d-k}V\otimes\Lambda^pV\rightarrow S^{d-k-1}V\otimes\Lambda^{p+1}V$ is the algebraic exterior derivative, which is defined on monomials by:
\begin{eqnarray*}
  l_1\cdots l_{d-k}\otimes m_1\wedge m_2\cdots\wedge m_p\mapsto \sum_{s=1}^{d-k}l_1l_2\cdots \hat{l_s}\cdots l_{d-k}\otimes l_s\wedge m_1\wedge m_2\cdots\wedge m_p,
  \end{eqnarray*}
and then extended linearly.
 In the tensor setting, Koszul Young flattenings have led to the current best lower bound for the border rank of matrix multiplication in \cite{2016arXiv160108229L,v011a011}.

 Another Young flattening $P_{k,d-k[\ell]}: S^kV^*\ot S^{\ell}V\ra S^{d-k+\ell}V$ is obtained
 by tensoring $P_{k,d-k}$ with the identity map $Id_{S^{\ell}V}: S^{\ell}V\ra S^{\ell}V$, and projecting (symmetrizing)
 the image in $S^{d-k}V\ot S^{\ell}V$ to $S^{d-k+\ell}V$. It goes under the name \lq\lq the method of shifted partial
 derivatives\rq\rq\ in the computer science literature.
 By using the method of shifted partial derivatives , A. Gupta, P. Kamath, N. Kayal and R. Saptharishi \cite{gupta4} proved
 if $\delta_1,\delta_2 \sim \sqrt{n}$, $\dim\ V=n^2$ and $[\perm_n]\in \sigma_r(v_{\delta_1}(\mathbb{P}S^{\delta_2}V))$, then $r=2^{\Omega(\sqrt{n})}$.

By computing the Koszul Young flattenings of Chow Varieties and their secant varieties, one can obtain additional equations for these varieties.
\begin{theorem}\label{rankchow}
Let $V=\mathbb{C}^d$ with a basis $\{x_1,\dots,x_d\}$ and $P=x_1\cdots x_d$,  and let $1\leq k<d$, $1\leq p <d$.
The map
\begin{eqnarray*}
P_{k,d-k}^{\wedge p}:S^kV^*\otimes\Lambda^pV\rightarrow S^{d-k-1}V\otimes \Lambda^{p+1}V
\end{eqnarray*}
has rank\begin{eqnarray}
\mathbf{S(p,d,k)}&=&\sum_{s=\max\{0,p-k\}}^{\min\{p,d-k-1\}}\binom{d}{s}\binom{d-s}{d-k+p-2s}\binom{d-k+p-2s-1}{p-s}\\
&=&\frac{d!}{p!(d-p-1)!}\sum_{s=\max\{0,p-k\}}^{\min\{p,d-k-1\}}\frac{\binom{p}{s}\binom{d-1-p}{s+k-p}}{d-k+p-2s}.
\end{eqnarray}
In particular, when $\lceil\frac{d}{2}\rceil\leq k\leq d-3$ $(d\geq6)$, the $(\mathbf{S(p,d,k)}+1)\times (\mathbf{S(p,d,k)}+1)$ minors
of $P_{k,d-k}^{\wedge p}$ are in the ideal of $Ch_d(V)$.
\end{theorem}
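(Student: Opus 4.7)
The plan is to compute the rank of $P_{k,d-k}^{\wedge p}$ for $P=x_1\cdots x_d$ by decomposing according to the action of the diagonal torus $(\mathbb{C}^*)^d$ and identifying each weight piece with a boundary map in the augmented simplicial chain complex of a full simplex, and then to deduce the equations for $Ch_d(V)$ by a $GL(V)$-orbit closure argument.

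Since $P$ is multilinear, $\partial^I P$ vanishes unless the multi-index $I$ has all entries in $\{0,1\}$; hence $\operatorname{Im}(P_{k,d-k})$ equals the squarefree subspace $\widetilde{S}^{d-k}V:=\spa\{x^T:T\subset[d],\ |T|=d-k\}$, and $\trank(P_{k,d-k}^{\wedge p})$ coincides with the rank of the wedge map $\wedge_{d-k,p}$ restricted to $\widetilde{S}^{d-k}V\otimes\Lambda^pV$. This restricted map is equivariant for the diagonal torus, hence splits as a direct sum of maps indexed by target weights $\mu\in\{0,1,2\}^d$ with $|\mu|=d-k+p$. Writing $U=\mu^{-1}(2)$, $W=\mu^{-1}(1)$, $u=|U|$, $w=|W|=d-k+p-2u$, the $\mu$-weight source is parametrised by subsets $T'\subset W$ with $|T'|=d-k-u$ (via $T=U\cup T'$ and $J=U\cup(W\setminus T')$), of dimension $\binom{w}{d-k-u}$; the squarefree $\mu$-weight part of the target is indexed analogously by subsets $M'\subset W$ with $|M'|=d-k-u-1$, of dimension $\binom{w}{d-k-u-1}$.

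A direct computation, absorbing the signs arising from rewriting $x_i\wedge x_J$ in sorted form into a choice of basis, identifies the $\mu$-weight piece of the map with the simplicial boundary $\partial_{d-k-u-1}:\widetilde{C}_{d-k-u-1}(\Delta^W)\to\widetilde{C}_{d-k-u-2}(\Delta^W)$ on the full simplex with vertex set $W$. Since $\Delta^W$ is contractible, its augmented simplicial chain complex is exact, and an easy induction from the top gives $\trank(\partial_j)=\binom{w-1}{j}$; specializing to $j=d-k-u-1$, the $\mu$-weight piece has rank $\binom{w-1}{d-k-u-1}=\binom{d-k+p-2u-1}{p-u}$. Multiplying by the number $\binom{d}{u}\binom{d-u}{w}$ of weights $\mu$ with $|U|=u$ and summing over $u$ with $\max\{0,p-k\}\le u\le\min\{p,d-k-1\}$ (outside this range the last binomial factor vanishes) yields the claimed value of $\mathbf{S(p,d,k)}$.

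For the second statement, the Koszul Young flattening construction $P\mapsto P_{k,d-k}^{\wedge p}$ is $GL(V)$-equivariant, so its rank is a $GL(V)$-invariant of $[P]$. When $\dim V=d$, $Ch_d(V)$ is the closure of the $GL(V)$-orbit of $[x_1\cdots x_d]$ (every product of $d$ linearly independent linear forms lies in this orbit), and by lower semicontinuity of matrix rank every $[Q]\in Ch_d(V)$ satisfies $\trank(Q_{k,d-k}^{\wedge p})\le\mathbf{S(p,d,k)}$, so the $(\mathbf{S(p,d,k)}+1)\times(\mathbf{S(p,d,k)}+1)$ minors are in the ideal of $Ch_d(V)$. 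The main technical obstacle is the sign-tracking in the third paragraph: one must check that the signs from $x_i\wedge x_J$ admit an overall choice of basis making the $\mu$-weight restriction of the squarefree de Rham complex chain-isomorphic to $\widetilde{C}_\bullet(\Delta^W)$, after which the rank formula drops out of the acyclicity of a simplex.
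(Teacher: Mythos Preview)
Your proof is correct and shares the paper's overall architecture: both reduce to the wedge map restricted to the squarefree part, decompose by torus weights (your $\mu$-decomposition is exactly the paper's decomposition into the spaces $W_{k_1,\dots,k_s;j_1,\dots}$, with your $|U|$ equal to the paper's $s$), and then sum the ranks of the weight pieces. The genuine difference is in how the rank of a single weight piece is computed. The paper observes that, after stripping off the repeated indices, the piece is an $\mathfrak{S}_w$-equivariant map between spaces whose decompositions into irreducibles share exactly one hook summand $[d-k-s,1^{p-s}]$; Schur's lemma then forces the image to be that summand, of dimension $\binom{w-1}{p-s}$, and no signs ever enter. You instead identify the weight piece with a simplicial boundary map on the full simplex $\Delta^W$ and read off the rank from exactness of the augmented chain complex. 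Your route is more elementary in that it avoids the representation-theoretic input $\Lambda^j[w-1,1]\cong[w-j,1^j]$, at the cost of the sign bookkeeping you flag at the end; the paper's route buys a cleaner one-line computation once that input is granted. For the ``in particular'' clause, your orbit-closure and semicontinuity argument is correct and in fact shows the minors vanish on $Ch_d(V)$ for \emph{all} $k,p$; the restriction $\lceil d/2\rceil\le k\le d-3$ in the statement is there only so that (via comparison with the generic rank $\dim S_{(d-k,1^p)}V$) these minors are not identically zero, which your write-up does not address but the theorem as stated does not require.
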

\begin{remark}Proposition \ref{nontrivial} implies that the  equations
I obtain here are nontrivial when $\lceil\frac{d}{2}\rceil\leq k\leq d-3$.
\end{remark}
\begin{theorem}\label{rankschow}
Let $ V=\mathbb{C}^{rd}$ with a basis $\{x_1,\dots,x_{rd}\}$ and $P=x_1\cdots x_d+x_{d+1}\cdots x_{2d}+\cdots+x_{(r-1)d+1}\cdots x_{rd}$. Assume $1\leq k<d$, $1\leq p<d$, $r\geq2$.
Then the map
\begin{eqnarray*}
P_{k,d-k}^{\wedge p}:S^kV^*\otimes\Lambda^pV\rightarrow S^{d-k-1}V\otimes \Lambda^{p+1}V
\end{eqnarray*}
has rank
\begin{eqnarray}
\rank(P_{k,d-k}^{\wedge p})\leq r[\binom{d}{k}(\binom{dr}{p}-\binom{d}{p})+\mathbf{S(p,k,d)}].
\end{eqnarray}
In particular, when $d\geq2$, and $p=k=1$,
\begin{eqnarray*}
\rank (P_{1,d-1}^{\wedge1})\leq d^2r^2-r.
\end{eqnarray*}
Therefore the $(d^2r^2-r+1)\times(d^2r^2-r+1)$ minors of $P_{1,d-1}^{\wedge1}$ are in the ideal of
$\sigma_r(Ch_d(V))$.
\end{theorem}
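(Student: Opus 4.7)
The plan is to exploit the linearity of the Koszul Young flattening in $P$. Writing $P=\sum_{i=1}^r Q_i$ with $Q_i = x_{(i-1)d+1}\cdots x_{id}$, one has $P_{k,d-k}^{\wedge p}=\sum_{i=1}^r (Q_i)_{k,d-k}^{\wedge p}$, so $\rank(P_{k,d-k}^{\wedge p})\leq\sum_i\rank((Q_i)_{k,d-k}^{\wedge p})$. It therefore suffices to prove the single-summand bound $\rank((Q_i)_{k,d-k}^{\wedge p})\leq \binom{d}{k}(\binom{rd}{p}-\binom{d}{p})+\mathbf{S(p,d,k)}$ for each $i$.

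For the single-summand bound, let $V_i=\spa\{x_{(i-1)d+1},\dots,x_{id}\}\cong\mathbb{C}^d$ and fix a direct sum decomposition $\Lambda^p V=\Lambda^p V_i\oplus U_i$ with $\dim U_i=\binom{rd}{p}-\binom{d}{p}$. I split the domain $S^kV^*\otimes\Lambda^p V$ accordingly and bound the rank on each factor. On $S^kV^*\otimes \Lambda^p V_i$, observe that the image of $(Q_i)_{k,d-k}$ lies in $S^{d-k}V_i$ (every partial derivative of $Q_i$ involves only variables of $V_i$) and that $\wedge_{d-k,p}$ sends $S^{d-k}V_i\otimes \Lambda^p V_i$ into $S^{d-k-1}V_i\otimes \Lambda^{p+1}V_i$, so the restricted map coincides with the Koszul Young flattening of $Q_i$ computed intrinsically in $V_i\cong\mathbb{C}^d$, of rank $\mathbf{S(p,d,k)}$ by Theorem \ref{rankchow}. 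On $S^kV^*\otimes U_i$ I use the crude estimate: $(Q_i)_{k,d-k}$ has rank $\binom{d}{k}$, so the first arrow $(Q_i)_{k,d-k}\otimes Id_{U_i}$ has rank at most $\binom{d}{k}\dim U_i$, and the composition has rank no larger. Summing the two contributions and then summing over $i$ gives the stated inequality.

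For the special case $p=k=1$, direct computation from the formula in Theorem \ref{rankchow} gives $\mathbf{S(1,d,1)}=(d-1)+d(d-1)=d^2-1$, so the bound reads $r[d(dr-d)+d^2-1]=d^2r^2-r$. For the ideal statement, the same single-summand argument applies to any product $\widetilde Q=\ell_1\cdots \ell_d$ of $d$ linear forms in $V$ (take $V_i=\spa\{\ell_1,\dots,\ell_d\}$, padded to a $d$-dimensional subspace if necessary), so every $\widetilde P=\sum_{i=1}^r \widetilde Q_i\in \sigma_r^0(Ch_d(V))$ satisfies $\rank(\widetilde P_{1,d-1}^{\wedge 1})\leq d^2r^2-r$; since the minors are polynomial, their vanishing extends to the Zariski closure $\sigma_r(Ch_d(V))$.

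The main technical obstacle is verifying cleanly that the restriction of the Koszul Young flattening to $S^kV^*\otimes \Lambda^p V_i$ really recovers the intrinsic Koszul Young flattening of Theorem \ref{rankchow}; this amounts to carefully tracking how the exterior derivative $\wedge_{d-k,p}$ respects the decomposition $V=V_i\oplus V_i^c$ once the symmetric factor already lies in $S^{d-k}V_i$. Everything else is a rank inequality or the elementary combinatorial identity for $\mathbf{S(1,d,1)}$.
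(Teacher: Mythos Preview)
Your proposal is correct and follows essentially the same approach as the paper: decompose $\Lambda^pV=\Lambda^pV_i\oplus W_i$ for each block $V_i$, bound the contribution on $\Lambda^pV_i$ by $\mathbf{S(p,d,k)}$ via Theorem~\ref{rankchow}, and bound the complementary contribution by $\binom{d}{k}\bigl(\binom{rd}{p}-\binom{d}{p}\bigr)$. The only cosmetic difference is that you phrase the first reduction via linearity and subadditivity of rank for $P=\sum_i Q_i$, whereas the paper phrases it via the image description $\operatorname{image}(P_{k,d-k}^{\wedge p})=\wedge_{d-k,p}\bigl(\bigoplus_i (S^{d-k}V_i)_{\mathrm{reg}}\otimes\Lambda^pV\bigr)$; your explicit extension to arbitrary $\widetilde P\in\sigma_r^0(Ch_d(V))$ and passage to the closure is in fact cleaner than the paper's terse justification of ideal membership. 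One small caveat: your identity $\mathbf{S(1,d,1)}=(d-1)+d(d-1)=d^2-1$ uses the $s=1$ summand, which is present only for $d\geq 3$; for $d=2$ one has $\mathbf{S(1,2,1)}=1$, but the resulting bound $4r^2-3r$ is still $\leq d^2r^2-r$, so the conclusion survives.
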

\begin{remark}Theorem \ref{kyfl11}  implies that the  equations
I obtain here are nontrivial when $p=k=1$.
\end{remark}
Koszul Young flattenings can also be used to compute a lower bound for $\underline{\mathbf{R}}_S(x_1\cdots x _d)$.
By usual flattenings,  $\underline{\mathbf{R}}_S(x_1\cdots x _d)\geq \binom{d}{\lfloor\frac{d}{2}\rfloor}\sim\frac{2^d}{\sqrt{d}}$.
Ranestad and Schreyer \cite{MR2842085} showed $\mathbf{R}_S(x_1\cdots x _d)=2^{d-1}$. Therefore $\binom{d}{\lfloor\frac{d}{2}\rfloor}\leq\underline{\mathbf{R}}_S(x_1\cdots x _d)\leq\mathbf{R}_S(x_1\cdots x _d)\leq 2^{d-1}$.
 By computing Koszul Young flattenings, I get a lower bound of $\underline{\mathbf{R}}_S(x_1\cdots x _d)$ which is s better than $\binom{d}{\lfloor\frac{d}{2}\rfloor}$ when $d$ is odd, with an additional exponential term:
\begin{theorem}\label{chowsrank}
$\underline{\mathbf{R}}_S(x_1\cdots x _{2n+1})\geq\binom{2n+1}{n}(1+\frac{n^2}{(n+1)^2(2n-1)}).$
\end{theorem}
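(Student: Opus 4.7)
The plan is the standard Koszul Young flattening template for symmetric border rank lower bounds. Write $d=2n+1$, $V=\mathbb{C}^d$, $P=x_1\cdots x_d$, and choose an admissible pair $(p,k)$ (I expect small $p$ and $k\approx n$ to be the right choice).

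First I compute the rank of the Koszul Young flattening on a single $d$-th power. For $\ell\in V\setminus\{0\}$, the polarization $(\ell^d)_{k,d-k}:S^kV^*\to S^{d-k}V$ is rank-one with image $\langle\ell^{d-k}\rangle$, so after tensoring with $\mathrm{Id}_{\Lambda^p V}$ the image is $\langle\ell^{d-k}\rangle\otimes\Lambda^p V$. The exterior derivative $\wedge_{d-k,p}$ sends $\ell^{d-k}\otimes\omega$ to $(d-k)\,\ell^{d-k-1}\otimes(\ell\wedge\omega)$. Since the kernel of $\omega\mapsto\ell\wedge\omega$ on $\Lambda^p V$ is $\ell\wedge\Lambda^{p-1}V$, of dimension $\binom{\dim V-1}{p-1}$, one gets
\[
\rank\bigl((\ell^d)_{k,d-k}^{\wedge p}\bigr)=\binom{\dim V}{p}-\binom{\dim V-1}{p-1}=\binom{2n}{p}.
\]

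Next is the semicontinuity step. If $P\in\sigma_r(v_d(\mathbb{P}V))$, then $P=\lim_{t\to 0}\sum_{i=1}^r\ell_i(t)^d$; since $f\mapsto f_{k,d-k}^{\wedge p}$ is linear in $f$ and matrix rank is lower semi-continuous,
\[
\rank(P_{k,d-k}^{\wedge p})\leq r\cdot\binom{2n}{p}.
\]
Combining with Theorem \ref{rankchow}, which evaluates the left-hand side as $\mathbf{S}(p,d,k)$, yields
\[
\underline{\mathbf{R}}_S(x_1\cdots x_{2n+1})\geq\frac{\mathbf{S}(p,2n+1,k)}{\binom{2n}{p}}
\]
for every admissible pair $(p,k)$.

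The final step is to choose $(p,k)$ and to simplify the explicit sum defining $\mathbf{S}(p,d,k)$. For small $p$ the sum has only a couple of nonzero terms, and applying identities such as $\binom{2n+1}{n-1}=\tfrac{n}{n+2}\binom{2n+1}{n}$ and $\binom{2n}{n}=\tfrac{n+1}{2n+1}\binom{2n+1}{n}$ rewrites the ratio as $\binom{2n+1}{n}$ times a rational function of $n$; one then verifies that the chosen $(p,k)$ produces the factor $1+\tfrac{n^2}{(n+1)^2(2n-1)}$. The main obstacle I expect is this last step: identifying the cleanest pair $(p,k)$ and matching the resulting rational function to the stated closed form. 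The geometric input in the first two steps is routine, so the real work is binomial arithmetic.
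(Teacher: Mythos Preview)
Your framework is exactly right and matches the paper through the inequality
\[
\underline{\mathbf{R}}_S(x_1\cdots x_{2n+1})\ \ge\ \frac{\mathbf{S}(p,2n+1,k)}{\binom{2n}{p}}.
\]
The gap is in the last paragraph: the expectation that \emph{small} $p$ works, so that the sum for $\mathbf{S}(p,2n+1,k)$ has ``only a couple of nonzero terms'' reducible to a few binomial identities, is incorrect. For instance with $p=1$, $k=n$ one computes
\[
\frac{\mathbf{S}(1,2n+1,n)}{\binom{2n}{1}}=\binom{2n+1}{n}\Bigl(1+\frac{1}{n(n+2)}\Bigr),
\]
which is only a $1+O(1/n^{2})$ improvement, strictly weaker than the claimed $1+\frac{n^{2}}{(n+1)^{2}(2n-1)}\sim 1+\frac{1}{2n}$. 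Other fixed small values of $p$ give the same order of improvement and fall short of the theorem.

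The paper's choice is $p=k=n$, so the sum in $\mathbf{S}(n,2n+1,n)$ has $n+1$ terms:
\[
\mathbf{S}(n,2n+1,n)=\frac{(2n+1)!}{(n!)^{2}}\sum_{s=0}^{n}\frac{\binom{n}{s}^{2}}{1+2s}.
\]
This sum is not closed-form; the paper symmetrizes it via $s\leftrightarrow n-s$, uses $(1+2s)(1+2n-2s)\le (n+1)^{2}$, and then evaluates the resulting quadratic in $s$ against the moment identities $\sum_s\binom{n}{s}^{2}=\binom{2n}{n}$, $\sum_s s\binom{n}{s}^{2}=n\binom{2n-1}{n}$, $\sum_s s(s-1)\binom{n}{s}^{2}=n(n-1)\binom{2n-2}{n}$ to obtain the lower bound. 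So the ``binomial arithmetic'' you anticipate is not a verification of a closed form for a short sum, but a genuine analytic estimate of a full sum; your plan as written does not reach the stated bound without switching to $p=k=n$ and supplying this estimate.
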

\begin{remark}
When $d=2n$, I conjecture that with the same method one can show $\underline{\mathbf{R}}_S(x_1\cdots x _{2n})\geq\binom{2n}{n}(1+\frac{C}{2n})$ for some constant $C$ and for $n$ big enough, and I verified many cases with a computer.
 Theorem \ref{rankchow} implies when $d=3$, $\underline{\mathbf{R}}_S(x_1x_2x_3)\geq 4$, so $\underline{\mathbf{R}}_S(x_1x_2x_3)=4>\binom{3}{1}$;
when $d=4$, $\binom{4}{2}<7\leq\underline{\mathbf{R}}_S(x_1x_2x_3x_4)\leq8 $;
when $d=5$, $\binom{5}{2}<14\leq\underline{\mathbf{R}}_S(x_1x_2x_3x_4x_5)\leq16 $; when $d=6$, $\binom{6}{3}<28\leq\underline{\mathbf{R}}_S(x_1x_2x_3x_4x_5)\leq32 $.
\end{remark}
I compute the flattening rank of a generic polynomial in $v_{\delta_1}(\sigma_r(v_{\delta_2}(\mathbb{P}V)))\subset\mathbb{P}S^{n}V$,
where $\dim\ V=n^2$ and $\delta_1,\delta_2\sim\sqrt{n}$, and I compare it to that of the permanent:
\begin{theorem}\label{permcom}
Let $\delta_1,\delta_2 \sim \sqrt{n}$, $\delta_1\delta_2=n $ and $\dim\ V=n^2$, when $\frac{2n-\sqrt{n}\tlog(r)}{\sqrt{n}\tlog(n)}=\omega(1)$, i.e. $r=2^{2\sqrt{n}-\tlog(n){\omega(1)}}$,
\begin{eqnarray*}
[{\perm}_n]\  \not\in\sigma_\rho( v_{\delta_1}(\sigma_r(v_{\delta_2}(\mathbb{P}V))))
\end{eqnarray*}
for $\rho<2^{\sqrt{n}\tlog(n)\omega(1)}$.
\end{theorem}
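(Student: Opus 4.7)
My plan is to extend the Gupta--Kamath--Kayal--Saptharishi argument by applying the Young flattening (method of shifted partial derivatives)
$$P_{k,n-k[\ell]}\colon S^kV^*\otimes S^\ell V\to S^{n-k+\ell}V$$
with $k,\ell\sim\sqrt n$, and comparing its rank on $[\perm_n]$ with the maximum rank on a generic point of $\sigma_\rho(v_{\delta_1}(\sigma_r(v_{\delta_2}(\mathbb{P}V))))$. Since rank is subadditive in $P$, it suffices to bound $\rank(Q_{k,n-k[\ell]})$ uniformly for $Q=L^{\delta_1}\in v_{\delta_1}(\sigma_r(v_{\delta_2}(\mathbb{P}V)))$ with $L=\sum_{i=1}^r v_i^{\delta_2}$, and multiply by $\rho$.

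For the upper bound, I expand $k$-th order partials of $Q=L^{\delta_1}$ via the chain rule: each is a linear combination of terms of the form $L^{\delta_1-j}\,\partial^{k_1}L\cdots\partial^{k_j}L$ with $j\le k$, $k_s\ge 1$, and $k_1+\cdots+k_j=k$. Because $L$ is a sum of $r$ $\delta_2$-th powers, each factor $\partial^{k_s}L$ lies in the $r$-dimensional span of $\{v_i^{\delta_2-k_s}\}_{i=1}^r$. Counting compositions and index choices and using $\dim(W\cdot S^\ell V)\le\dim(W)\binom{n^2+\ell-1}{\ell}$, I expect
$$\rank(Q_{k,n-k[\ell]})\le\left(\sum_{j=1}^{k}r^j\binom{k-1}{j-1}\right)\binom{n^2+\ell-1}{\ell}\le(r+1)^k\binom{n^2+\ell-1}{\ell},$$
so by subadditivity the rank on $\sigma_\rho(v_{\delta_1}(\sigma_r(v_{\delta_2}(\mathbb{P}V))))$ is bounded by $\rho(r+1)^k\binom{n^2+\ell-1}{\ell}$. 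This is the step where the inner $\sigma_r$ structure enters: for $r=1$ one recovers a single Veronese and the bound collapses to $\binom{n^2+\ell-1}{\ell}$, matching the usual GKKS setup.

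For the lower bound I invoke the Gupta--Kamath--Kayal--Saptharishi estimate for the dimension of shifted partials of $\perm_n$, which gives $\rank((\perm_n)_{k,n-k[\ell]})\ge N(n,k,\ell)\binom{n^2+\ell-1}{\ell}$ with $\log N(n,\sqrt n,\ell)\sim 2n$ at the optimal $\ell\sim\sqrt n$. The common $\binom{n^2+\ell-1}{\ell}$ factor cancels in the comparison, and separation holds whenever $\rho(r+1)^k<N(n,k,\ell)$; taking logarithms gives $\log\rho+\sqrt n\log r<2n-O(1)$, which rearranges into the stated thresholds $\log r\le 2\sqrt n-\log(n)\omega(1)$ and $\log\rho<\sqrt n\log(n)\omega(1)$. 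The hardest part will be making the combinatorial upper bound in the second paragraph tight: overlaps when several $k_s$ or $i_s$ coincide have to be tracked carefully, and the bound $\dim(W\cdot S^\ell V)\le\dim(W)\binom{n^2+\ell-1}{\ell}$ must not lose too much from spurious relations between the monomials $v_{i_1}^{\delta_2-k_1}\cdots v_{i_j}^{\delta_2-k_j}\cdot S^\ell V$, so that the logarithmic comparison in the third paragraph actually yields the claimed range for $r$ and $\rho$ rather than a strictly weaker one.
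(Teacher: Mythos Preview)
Your approach diverges from the paper's and contains a quantitative error that breaks the argument. The paper does \emph{not} use shifted partial derivatives here: it applies the ordinary catalecticant $P_{k,n-k}$ with $k=\lfloor n/2\rfloor$. At that value the permanent satisfies $\rank((\perm_n)_{\lfloor n/2\rfloor,\lceil n/2\rceil})=\binom{n}{\lfloor n/2\rfloor}^2\sim 2^{2n}/n$, while Corollary~\ref{bounds1} bounds the flattening rank of a generic $[P]\in v_{\delta_1}(\sigma_r(v_{\delta_2}(\mathbb{P}V)))$ by $(r\lfloor n/2\rfloor)^{\delta_1}\sim 2^{\sqrt n(\log r+\log n)}$. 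The ratio $2^{2n-\sqrt n\log r-O(\sqrt n\log n)}$ then gives the stated thresholds on $r$ and $\rho$ directly.

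Your proposal instead takes $k\sim\sqrt n$ and asserts $\log N(n,\sqrt n,\ell)\sim 2n$. This is false by a wide margin. For any $\ell$, the image of $(\perm_n)_{k,n-k[\ell]}$ is contained in $(\text{span of }k\text{-th partials of }\perm_n)\cdot S^\ell V$, hence has dimension at most $\binom{n}{k}^2\binom{n^2+\ell-1}{\ell}$; thus your $N(n,k,\ell)\le\binom{n}{k}^2$. With $k\sim\sqrt n$ one gets $\log\binom{n}{\sqrt n}^2\sim\sqrt n\log n$, not $2n$. (Also, the GKKS argument takes $\ell$ of polynomial size in $n$, not $\ell\sim\sqrt n$.) Plugging the correct estimate into your comparison yields only $\log\rho+\sqrt n\log r\lesssim\sqrt n\log n$, which forces $r$ to be polynomially bounded in $n$ and cannot reach $r=2^{2\sqrt n-\log(n)\omega(1)}$ as the theorem claims. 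The entire strength of the result comes from the $2^{2n}$ numerator, and that is only available at $k=\lfloor n/2\rfloor$; with $k\sim\sqrt n$ no choice of $\ell$ will recover the stated range.
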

\begin{remark}
By using the model $\sigma_r(v_{\delta_1}(\mathbb{P}S^{\delta_2}V))$,$\delta_1,\delta_2 \sim \sqrt{n}$, where $\delta_1\delta_2=n $ and $\dim\ V=n^2$, and with the method of shifted partial derivatives, Gupta, Kamath, Kayal and Saptharishi \cite{gupta4} gave a lower bound $2^{\Omega(\sqrt{n})}$ for the permanent.  While by using the  model  $\sigma_\rho( v_{\delta_1}(\sigma_r(v_{\delta_2}(\mathbb{P}V))))$, where $\delta_1,\delta_2 \sim \sqrt{n}$, $\delta_1\delta_2=n $ and $\dim\ V=n^2$, and by the usual flattenings,  Theorem \ref{permcom} gives us a  lower bound $2^{\sqrt{n}\tlog(n)\omega(1)}$ for the permanent  when $r$ is relatively small (slightly smaller than $2^{2\sqrt{n}}$). However if $r$ is very big, we can not get much information by usual flattenings.
\end{remark}
\subsection{Organization}
In $\S\ref{disjoint}$ I define maps which I call disjoint linear maps, which are used to estimate the rank of flattenings and Koszul Young flattenings   of  polynomials in $\S\ref{ychow}$ and $\S\ref{cfvsv}$. In $\S\ref{ychow}$ I compute the  Koszul Young flattenings of $x_1x_2\cdots x_d$ and obtain equations for $Ch_d(\mathbb{C}^{d})$. Then I get a lower bound of symmetric border rank for $x_1x_2\cdots x_d\in S^d\mathbb{C}^{d}$ for $d$ odd. In $\S\ref{yschow}$ I compute the Koszul Young flattenings of $\sum_{i=1}^rx_{(i-1)d+1}x_{(i-1)d+2}\cdots x_{(i-1)d+d}\in S^d\mathbb{C}^{dr}$ and obtain equations for $\sigma_r(Ch_d(\mathbb{C}^{dr}))$. In $\S\ref{cfvsv}$ I compute the  flattening rank of a generic polynomial $[P]=[(x_1^{\delta_2}+\cdots x_r^{\delta_2})^{\delta_1}]\in v_{\delta_1}(\sigma_r(v_{\delta_2}(\mathbb{P}^{r-1})))$.
In $\S\ref{dpcom}$ I compare the flattening rank of a generic polynomial in $v_{\delta_1}(\sigma_r(v_{\delta_2}(\mathbb{P}^{n^2-1})))$ ($\delta_1,\delta_2\sim\sqrt{n}$) with that of the  $\perm_n$, and get a new complexity lower bound for the permanent as long as $r$ is relatively small.
\subsection{Acknowledgement}
I thank my advisor J.M. Landsberg for discussing all the details throughout this article. I thank Y. Qi for discussing the second part of this article. I thank Iarrobino for providing references for Theorem \ref{classic}.
Part of this work was done while the author was visiting the Simons Institute for the Theory of Computing, UC Berkeley for the Algorithms and Complexity in Algebraic Geometry program in 2014, I thank Simons Institute for providing a good research environment.
\section{ Koszul Young Flattenings of Chow varieties and their secant varieties}\label{KyChow}
\subsection{Disjoint linear maps}\label{disjoint}
\begin{definition}
Let $V$ and $W$ to be two finite dimensional complex vector spaces and let $f:V\rightarrow W$ be a linear map.
 Let $V_1,V_2,\dots V_m$ be subspaces of $V$ such that $V=\bigoplus_{i=1}^mV_i$. The map $f$ is called a disjoint map with respect to the decomposition $V={\bigoplus}_{i=1}^mV_i$ if $f(V)=\bigoplus_{i=1}^m f(V_i)$.
\end{definition}

Note that if $f:V\rightarrow W$ is a disjoint linear map with respect to the decomposition  $V=\bigoplus_{i=1}^mV_i$,
then $\rank(f)={\sum}_{i=1}^m \rank(f|_{V_i})$.

\subsection{Koszul Young flattenings of Chow varieties}\label{ychow}
Let $V=\mathbb{C}^d$ with  basis $\{x_1,\dots,x_d\}$. Let $P=x_1\cdots x_d$ and write $[d]=\{1,\dots,d\}$. The following proposition is standard:
\begin{proposition}\label{partial}
Let  $1\leq k<d$, then the image of $$P_{k,d-k}:S^kV^*\rightarrow S^{d-k}V$$
is $(S^{d-k}V){{\rm reg}}:={\rm span}\{x_{i_1}x_{i_2}\cdots x_{i_{d-k}}\}_{1\leq i_1<i_2<\cdots <i_{d-k}\leq d}$.
\end{proposition}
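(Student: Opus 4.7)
The plan is to directly compute the image of $P_{k,d-k}$ by evaluating it on the monomial basis of $S^kV^*$ and reading off what appears.

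Recall that in the coordinates $\{x_1,\dots,x_d\}$ of $V$, the map $P_{k,d-k}$ sends $\frac{\partial^k}{\partial x_1^{i_1}\cdots\partial x_d^{i_d}}$ (with $i_1+\cdots+i_d=k$) to the corresponding partial derivative of $P=x_1\cdots x_d$. Since $P$ is multilinear in each variable, $\frac{\partial^2 P}{\partial x_j^2}=0$ for every $j$. Hence the only surviving basis elements of $S^kV^*$ are those with all $i_j\in\{0,1\}$, i.e.\ the squarefree differential operators $\frac{\partial^k}{\partial x_{j_1}\cdots\partial x_{j_k}}$ indexed by $k$-subsets $\{j_1,\dots,j_k\}\subset[d]$.

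For such a squarefree operator, a direct computation gives
\begin{equation*}
\frac{\partial^k P}{\partial x_{j_1}\cdots\partial x_{j_k}}\;=\;\prod_{i\in[d]\setminus\{j_1,\dots,j_k\}} x_i,
\end{equation*}
which is precisely the squarefree monomial of degree $d-k$ supported on the complementary set. As $\{j_1,\dots,j_k\}$ ranges over all $k$-subsets of $[d]$, the complements range over all $(d-k)$-subsets, so the images exhaust the spanning set of $(S^{d-k}V)_{\mathrm{reg}}:=\mathrm{span}\{x_{i_1}\cdots x_{i_{d-k}}\mid 1\le i_1<\cdots<i_{d-k}\le d\}$. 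Since the non-squarefree differential operators map to $0$, the image of $P_{k,d-k}$ is contained in, and equal to, $(S^{d-k}V)_{\mathrm{reg}}$.

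There is essentially no obstacle here: the statement is a one-line consequence of multilinearity of $x_1\cdots x_d$ in each variable, together with the observation that the squarefree partials produce exactly the complementary squarefree monomials. This proposition is used later only to fix notation for the regular monomial subspace $(S^{d-k}V)_{\mathrm{reg}}$, which will appear as the target of the restricted flattening in the Koszul Young flattening computations for $Ch_d(V)$.
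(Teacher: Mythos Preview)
Your proof is correct and is exactly the standard argument the paper has in mind; indeed, the paper does not give a proof at all, simply stating that the proposition is standard. Your direct computation---non-squarefree operators annihilate the multilinear $P$, while squarefree ones yield the complementary squarefree monomial---is the intended justification.
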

By Proposition \ref{partial},
\begin{corollary}\label{wedge}
Let  $1\leq k<d$, $1\leq p<d$,
the image of
\begin{eqnarray*}
P_{k,d-k}^{\wedge p}:S^kV^*\otimes\Lambda^pV\rightarrow S^{d-k-1}V\otimes \Lambda^{p+1}V
\end{eqnarray*}
is the image of $(S^{d-k}V)_{{\rm reg}}\otimes\Lambda^pV$ under the map
\begin{eqnarray*}
\wedge_{d-k,p}:S^{d-k}V\otimes\Lambda^pV\rightarrow S^{d-k-1}V\otimes\Lambda^{p+1}V.
\end{eqnarray*}
\end{corollary}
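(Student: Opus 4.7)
The plan is to simply unwind the definition of the Koszul Young flattening as a composition of two maps and apply Proposition \ref{partial} to identify the intermediate image. There is no real obstacle here; the corollary is essentially a formal consequence of the preceding proposition.

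First I would recall that $P_{k,d-k}^{\wedge p}$ is defined as the composition
\begin{equation*}
S^kV^*\otimes \Lambda^pV \xrightarrow{\,P_{k,d-k}\otimes \mathrm{Id}_{\Lambda^pV}\,} S^{d-k}V\otimes\Lambda^pV \xrightarrow{\,\wedge_{d-k,p}\,} S^{d-k-1}V\otimes\Lambda^{p+1}V.
\end{equation*}
For any composition of linear maps $g\circ f$, the image $(g\circ f)(X)$ equals $g(f(X))$, and when $f$ is applied to the whole domain this is just $g(\mathrm{Im}(f))$.

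Next I would identify $\mathrm{Im}(P_{k,d-k}\otimes\mathrm{Id}_{\Lambda^pV})$. Since tensoring a linear map with the identity on a fixed vector space sends the domain onto $\mathrm{Im}(P_{k,d-k})\otimes \Lambda^pV$, Proposition \ref{partial} gives
\begin{equation*}
\mathrm{Im}(P_{k,d-k}\otimes\mathrm{Id}_{\Lambda^pV}) = (S^{d-k}V)_{\mathrm{reg}}\otimes\Lambda^pV.
\end{equation*}

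Finally, applying $\wedge_{d-k,p}$ to this subspace yields
\begin{equation*}
\mathrm{Im}(P_{k,d-k}^{\wedge p}) = \wedge_{d-k,p}\bigl((S^{d-k}V)_{\mathrm{reg}}\otimes\Lambda^pV\bigr),
\end{equation*}
which is exactly the statement of the corollary. The subsequent work (carried out in the main theorem) will be to actually compute the dimension of this image, which is a genuine combinatorial problem; here, however, the task is only to reduce the rank computation of $P_{k,d-k}^{\wedge p}$ to that of the restriction of $\wedge_{d-k,p}$ to the explicit subspace $(S^{d-k}V)_{\mathrm{reg}}\otimes\Lambda^pV$, and this is immediate from Proposition \ref{partial}.
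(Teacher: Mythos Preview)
Your proof is correct and follows exactly the paper's approach: the paper presents this corollary as immediate from Proposition~\ref{partial}, and your argument simply spells out the one-line reasoning behind that attribution by writing $P_{k,d-k}^{\wedge p}$ as the composition $\wedge_{d-k,p}\circ(P_{k,d-k}\otimes\mathrm{Id}_{\Lambda^pV})$ and applying the proposition to identify the intermediate image.
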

Since the map $\wedge_{d-k,p}|_{(S^{d-k}V)_{{\rm reg}}\otimes\Lambda^pV}$ preserves weights, it is helpful for us to decompose $(S^{d-k}V)_{{\rm reg}}\otimes\Lambda^pV$ into a direct sum of weight spaces.
\begin{lemma}\label{decom}
Let $W_{k_1,..,k_s;j_1,j_2,\cdots,j_{d-k+p-2s}}$ be the span of
$\{x_{k_1}\cdots x_{k_s}x_{m_1}\cdots x_{m_{d-k-s}}\otimes x_{k_1}\wedge\cdots x_{k_s}\wedge x_{n_1}\wedge\cdots \wedge x_{n_{p-s}}\}_{\{m_1,\dots,m_{d-k-s},n_1,\dots,n_{p-s}\}=\{j_1,j_2,\dots,j_{d-k+p-2s}\}}.$
Then
\begin{eqnarray}\label{decomp3}
&&(S^{d-k}V)_{{\rm reg}}\otimes\Lambda^pV=\\\nonumber
&&\bigoplus_{\{k_1,..,k_s\}\subset [d]\ \max\{0,p-k\}\leq s\leq \min\{p,d-k\}}\bigoplus_{\{j_1,j_2,\dots,j_{d-k+p-2s}\}\subset [d]- \{k_1,...,k_s\}}W_{k_1,...,k_s;j_1,j_2,\dots,j_{d-k+p-2s}}.
\end{eqnarray}
Moreover
$\wedge_{d-k,p}|_{(S^{d-k}V)_{{\rm reg}}\otimes\Lambda^pV}$ is a disjoint map with respect to this decomposition.
\end{lemma}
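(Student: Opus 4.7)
The plan is to exhibit the decomposition \eqref{decomp3} as the weight-space decomposition of $(S^{d-k}V)_{{\rm reg}}\otimes\Lambda^pV$ under the diagonal action of the maximal torus $T\subset GL(V)$ on the chosen basis $\{x_1,\dots,x_d\}$, and then use that $\wedge_{d-k,p}$ is $GL(V)$-equivariant, hence $T$-equivariant, to conclude that it sends each weight space into the (unique) weight space of the same weight in $S^{d-k-1}V\otimes\Lambda^{p+1}V$. Since distinct weight spaces in the source map to distinct weight spaces in the target, the images cannot overlap and the map is automatically disjoint with respect to any weight-space decomposition.

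First I would record the basis. The space $(S^{d-k}V)_{{\rm reg}}$ has basis the squarefree monomials $x_{i_1}\cdots x_{i_{d-k}}$ with $i_1<\cdots<i_{d-k}$, and $\Lambda^pV$ has basis the wedges $x_{j_1}\wedge\cdots\wedge x_{j_p}$ with $j_1<\cdots<j_p$. A pure tensor of these two basis vectors is a $T$-weight vector whose weight has a $2$ in each coordinate that occurs in both index sets, a $1$ in each coordinate occurring in exactly one of them, and a $0$ elsewhere. Writing $\{k_1,\dots,k_s\}$ for the set of indices of weight $2$ and $\{j_1,\dots,j_{d-k+p-2s}\}$ for the (disjoint) set of indices of weight $1$, one sees $W_{k_1,\dots,k_s;j_1,\dots,j_{d-k+p-2s}}$ is exactly the span of all basis tensors with this prescribed weight. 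The admissible range of $s$ follows from the obvious counts $s\le p$, $s\le d-k$, and from $s+(d-k+p-2s)\le d$, which forces $s\ge p-k$; grouping by weight then yields the direct sum decomposition \eqref{decomp3}.

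Next I would verify $T$-equivariance of $\wedge_{d-k,p}:S^{d-k}V\otimes\Lambda^pV\to S^{d-k-1}V\otimes\Lambda^{p+1}V$. On a monomial $l_1\cdots l_{d-k}\otimes m_1\wedge\cdots\wedge m_p$, each summand in the defining formula removes one $l_s$ from the symmetric factor and inserts the same $l_s$ into the wedge factor; the total weight is unchanged. Because the map is $GL(V)$-equivariant (being the natural component of a Koszul differential), it is in particular $T$-equivariant, so it carries the weight-$\mu$ subspace of the source into the weight-$\mu$ subspace of the target. Restricting to $(S^{d-k}V)_{{\rm reg}}\otimes\Lambda^pV$ preserves this property.

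Finally I would assemble the disjoint property. The different summands $W_{k_1,\dots,k_s;j_1,\dots,j_{d-k+p-2s}}$ have pairwise distinct weights, so their images under $\wedge_{d-k,p}$ lie in pairwise distinct weight spaces of $S^{d-k-1}V\otimes\Lambda^{p+1}V$ and therefore intersect trivially. Hence
\[
\wedge_{d-k,p}\bigl((S^{d-k}V)_{{\rm reg}}\otimes\Lambda^pV\bigr)=\bigoplus \wedge_{d-k,p}\bigl(W_{k_1,\dots,k_s;j_1,\dots,j_{d-k+p-2s}}\bigr),
\]
which is precisely the disjoint-map condition. No step is really an obstacle here; the only mildly fiddly point is bookkeeping the range of $s$ and checking that the enumeration of the $W$'s in \eqref{decomp3} genuinely covers every basis tensor exactly once, which is immediate from the weight description.
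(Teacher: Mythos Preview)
Your proposal is correct and follows exactly the approach the paper indicates. The paper does not actually supply a proof of this lemma; it only prefaces the statement with the single sentence ``Since the map $\wedge_{d-k,p}|_{(S^{d-k}V)_{{\rm reg}}\otimes\Lambda^pV}$ preserves weights, it is helpful for us to decompose $(S^{d-k}V)_{{\rm reg}}\otimes\Lambda^pV$ into a direct sum of weight spaces,'' and then moves on. Your argument is precisely the unpacking of that hint: you identify each $W_{k_1,\dots,k_s;j_1,\dots,j_{d-k+p-2s}}$ as a torus weight space, verify the bookkeeping on $s$, and use $T$-equivariance of the Koszul differential to obtain disjointness.
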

\begin{example}
Let $d=3$ and $k=p=1$, then
\begin{eqnarray*}
(S^{2}V)_{{\rm reg}}\otimes V=\bigoplus_{1\leq i,j\leq3\ i\neq j}W_{i;j}\bigoplus W_{;1,2,3}.
\end{eqnarray*}
Where $W_{i;j}=\spa\{x_ix_j\otimes x_i\}$, and $W_{;1,2,3}=\spa\{x_1x_2\otimes x_3, x_1x_3\otimes x_2, x_2x_3\otimes x_1\}$.
\end{example}
For the symmetric group $\mathfrak{S}_n$ and any partition $\lambda$ with order $n$,  let $[\lambda]$  denote the irreducible $\mathfrak{S}_n$-module corresponding to the partition $\lambda$.
\begin{lemma}\label{keylemma}
Let $Q={\rm span}\{y_1,...,y_{u+v}\}$ and let
\begin{eqnarray*}
A_{u,v}Q={\rm span}\{y_{m_1}\cdots y_{m_u}\otimes y_{n_1}\wedge\cdots\wedge y_{n_v}\}_{\{m_1,\dots,m_u,n_1,\dots,n_v\}=[u+v]}.
\end{eqnarray*}
Then the rank of $\wedge_{u,v}|_{A_{u,v}Q}:A_{u,v}Q\rightarrow A_{u-1,v+1}Q$ is $\binom{u+v-1}{v}$.
Moreover \begin{eqnarray*}
\rank(\wedge_{d-k,p}|_{W_{k_1,...,k_s;j_1,j_2,\dots,j_{d-k+p-2s}}})=\rank(\wedge_{d-k-s,p-s}|_{A_{d-k-s,p-s}Q})=\binom{d-k+p-2s-1}{p-s}.
\end{eqnarray*}
\end{lemma}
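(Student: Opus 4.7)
The plan is to organize $\{A_{u,v}Q\}_{u+v=N}$ into a finite complex with differentials $\wedge_{u,v}|_{A_{u,v}Q}$, to prove that this complex is exact by exhibiting an explicit contracting homotopy, and then to read off the ranks by a Pascal-rule induction. First I would introduce an operator $\delta:A_{u,v}Q\to A_{u+1,v-1}Q$ that transfers one index from the exterior slot to the symmetric slot with alternating signs:
\[
y_{m_1}\cdots y_{m_u}\otimes y_{n_1}\wedge\cdots\wedge y_{n_v}\longmapsto \sum_{j=1}^v(-1)^{j-1}\,y_{n_j}y_{m_1}\cdots y_{m_u}\otimes y_{n_1}\wedge\cdots\wedge\widehat{y_{n_j}}\wedge\cdots\wedge y_{n_v}.
\]
Since no $n_j$ equals any $m_i$, the new symmetric product is still squarefree, so $\delta$ does land in $A_{u+1,v-1}Q$.

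A direct computation on monomials, combining Euler's identity on the symmetric side with the relation $\iota_i(y_j\wedge\omega)+y_j\wedge\iota_i\omega=\delta_{ij}\omega$ on the exterior side, yields the Koszul identity
\[
(\wedge_{u+1,v-1}\circ\delta+\delta\circ\wedge_{u,v})\big|_{A_{u,v}Q}=(u+v)\,\mathrm{id}_{A_{u,v}Q}.
\]
Because $u+v=N\geq 1$, $\delta/N$ is a chain homotopy between $0$ and the identity, so the complex
\[
0\to A_{N,0}Q\to A_{N-1,1}Q\to\cdots\to A_{0,N}Q\to 0
\]
is exact. Setting $r_v:=\rank(\wedge_{N-v,v}|_{A_{N-v,v}Q})$ and using $\dim A_{N-v,v}Q=\binom{N}{v}$, exactness yields the recurrence $r_v+r_{v-1}=\binom{N}{v}$ with $r_{-1}=0$; Pascal's rule and induction then give $r_v=\binom{N-1}{v}=\binom{u+v-1}{v}$, proving the first assertion.

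For the ``moreover'' part I would factor $x_{k_1}\cdots x_{k_s}$ out of the symmetric slot and $x_{k_1}\wedge\cdots\wedge x_{k_s}$ out of the exterior slot in the definition of $W_{k_1,\ldots,k_s;j_1,\ldots,j_{d-k+p-2s}}$. The key observation is that whenever $\wedge_{d-k,p}$ differentiates with respect to a frozen variable $x_{k_i}$, the resulting term wedges $x_{k_i}$ against an exterior monomial that already contains $x_{k_i}$ and therefore vanishes; only the derivatives in the ``free'' indices survive. After commuting each new wedge factor past the frozen block $x_{k_1}\wedge\cdots\wedge x_{k_s}$, which contributes a uniform sign $(-1)^s$, the restricted map is identified with $\wedge_{d-k-s,p-s}|_{A_{d-k-s,p-s}Q'}$ for $Q'=\spa\{x_{j_1},\ldots,x_{j_{d-k+p-2s}}\}$, and the first part of the lemma evaluates its rank as $\binom{d-k+p-2s-1}{p-s}$.

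The main obstacle will be the sign and index bookkeeping needed to verify the Koszul identity; once that is in place, exactness plus Pascal's rule delivers the rank formula mechanically, and the reduction in the ``moreover'' part is essentially by inspection.
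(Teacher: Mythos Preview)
Your proof is correct, but it takes a genuinely different route from the paper's. The paper argues representation-theoretically: it identifies $A_{u,v}Q$ as the $\mathfrak{S}_{u+v}$-module $\Lambda^v\big([u+v-1,1]\oplus\triv\big)=[u,1^v]\oplus[u+1,1^{v-1}]$, observes that $A_{u-1,v+1}Q=[u-1,1^{v+1}]\oplus[u,1^v]$, and then invokes Schur's lemma to conclude that the image of the equivariant map $\wedge_{u,v}$ is exactly the common summand $[u,1^v]$, of dimension $\binom{u+v-1}{v}$. For the ``moreover'' clause the paper simply asserts that the two maps are ``essentially the same.'' Your approach instead builds the Koszul-type complex $\{A_{N-v,v}Q\}_v$, exhibits an explicit contracting homotopy $\delta$ to prove exactness, and extracts the ranks via the Pascal recursion $r_v+r_{v-1}=\binom{N}{v}$; you also spell out the reduction in the ``moreover'' clause by noting that differentiating a frozen variable $x_{k_i}$ produces a wedge against a factor already containing $x_{k_i}$ and hence vanishes. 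Your argument is more elementary and self-contained---it needs no knowledge of the irreducible decomposition of exterior powers of the standard representation---while the paper's argument is shorter and immediately identifies the image as a specific irreducible, which is conceptually cleaner if one already has that machinery.
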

\begin{proof}Recall that $[u+v-1,1]$ is the standard representation of $\mathfrak{S}_{u+v}$ and $\Lambda^s[u+v-1,1]=[u+v-s,1^s]$ ( see\cite{FH} Exercise 4.6).
As a $\mathfrak{S}_{u+v}$-module, $A_{u,v}Q=\Lambda^u([u+v-1,1]+\mathbb{C})=\Lambda^u{[u+v-1,1]}+\Lambda^{u-1}[u+v-1,1]=[u,1^v]\oplus [u+1,1^{v-1}]$ and
$A_{u-1,v+1}Q=[u-1,1^{v+1}]\oplus [u,1^v]$, since $\wedge_{u,v}|_{A_{u,v}Q}$ is a
$\mathfrak{S}_{u+v}$-module map, by Schur's lemma, image$(\wedge_{u,v}|_{A_{u,v}Q})=[u,1^v]$,
whose dimension is $\binom{u+v-1}{v}$.
We can see $\wedge_{d-k,p}|_{W_{k_1,...,k_s;j_1,j_2,\dots,j_{d-k+p-2s}}}$ is essentially the same map as $\wedge_{d-k-s,p-s}|_{A_{d-k-s,p-s}Q}$,
so \begin{eqnarray*}
\rank(\wedge_{d-k,p}|_{W_{k_1,..,k_s;j_1,j_2,\cdots,j_{d-k+p-2s}}})=\rank(\wedge_{d-k-s,p-s}|_{A_{d-k-s,p-s}Q})=\binom{d-k+p-2s-1}{p-s}.
\end{eqnarray*}
\end{proof}
\begin{theorem}{\rm (\cite{MR0480472,MR515043,MR1735271})}\label{classic}
 For a generic polynomial $P\in S^dV$, and for any $1\leq k<d$, the flattening $P_{k,d-k}: S^kV^*\rightarrow S^{d-k}V$
 is of maximal rank, i.e. $\min \{\binom{k+\dim V-1}{k},\binom{d-k+\dim V-1}{d-k}\}$.
\end{theorem}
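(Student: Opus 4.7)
The plan is to reduce to an injectivity statement in one regime and then bound the dimension of the locus where injectivity fails. Set $n = \dim V$. First, observe that, viewed as tensors in $S^kV \otimes S^{d-k}V$, the images $F_{k,d-k}(P)$ and $F_{d-k,k}(P)$ differ only by transposition of factors; consequently, the linear maps $P_{k,d-k} : S^kV^* \to S^{d-k}V$ and $P_{d-k,k} : S^{d-k}V^* \to S^kV$ are transposes of each other and have equal rank. So I may assume $k \leq d-k$, in which case the claimed maximum is $\binom{k+n-1}{k}$, and the goal becomes showing that $P_{k,d-k}$ is injective for generic $P$.

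Injectivity of $P_{k,d-k}$ is a Zariski-open condition on $P \in S^dV$ (non-vanishing of some maximal minor), so it suffices to show that the bad locus
\[
B = \bigl\{\, P \in S^dV : P_{k,d-k}\text{ is not injective}\,\bigr\} = \bigcup_{0 \neq [\alpha] \in \mathbb{P}(S^kV^*)} \ker(\mu_\alpha)
\]
is a proper subvariety of $S^dV$, where $\mu_\alpha : S^dV \to S^{d-k}V$ denotes the contraction $P \mapsto \alpha(\partial)P$. The key intermediate step is to compute $\dim \ker(\mu_\alpha)$ for $\alpha \neq 0$: I claim that $\mu_\alpha$ is surjective. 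Indeed, under the apolarity pairing the dual map $\mu_\alpha^* : S^{d-k}V^* \to S^dV^*$ is identified with multiplication by $\alpha$ inside the polynomial algebra $\Sym^\bullet V^*$, and since the polynomial ring is an integral domain and $\alpha \neq 0$, this multiplication is injective; hence $\mu_\alpha$ is surjective and
\[
\dim \ker(\mu_\alpha) = \binom{d+n-1}{d} - \binom{d-k+n-1}{d-k}
\]
independently of the nonzero $\alpha$ chosen.

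To finish, I would realize $B$ as the image under the second projection of the incidence variety
\[
\mathcal{I} = \bigl\{\,([\alpha], P) \in \mathbb{P}(S^kV^*) \times S^dV : \alpha(\partial)P = 0\,\bigr\},
\]
which is a vector bundle over $\mathbb{P}(S^kV^*)$ of rank $\binom{d+n-1}{d} - \binom{d-k+n-1}{d-k}$. Therefore
\[
\dim B \;\leq\; \Bigl(\binom{k+n-1}{k} - 1\Bigr) + \binom{d+n-1}{d} - \binom{d-k+n-1}{d-k}.
\]
Because $k \leq d-k$ forces $\binom{k+n-1}{k} \leq \binom{d-k+n-1}{d-k}$, this yields $\dim B \leq \binom{d+n-1}{d} - 1 < \dim S^dV$, so $B$ is a proper subvariety and generic $P$ avoids it.

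I do not anticipate a serious obstacle: the whole argument is a semicontinuity plus incidence-variety dimension count. The one point that could at first look delicate is the surjectivity of $\mu_\alpha$, but passing to the dual reduces it to the zero-divisor-free property of $\Sym^\bullet V^*$, which is elementary.
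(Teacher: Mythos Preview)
The paper does not supply its own proof of Theorem~\ref{classic}; it is quoted as a classical result with references and then used as input to Proposition~\ref{nontrivial}. So there is nothing in the paper to compare against line by line.

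Your argument is correct. The reduction to $k\le d-k$ via transposition is fine, and the crucial point---that for every nonzero $\alpha\in S^kV^*$ the contraction $\mu_\alpha:S^dV\to S^{d-k}V$ is surjective because its transpose is multiplication by $\alpha$ in the integral domain $\Sym^\bullet V^*$---is exactly right. The incidence-variety dimension count then goes through: $\dim\mathcal{I}=\bigl(\binom{k+n-1}{k}-1\bigr)+\binom{d+n-1}{d}-\binom{d-k+n-1}{d-k}\le \binom{d+n-1}{d}-1$, hence $B$ is a proper closed subset.

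One small remark on efficiency rather than correctness: since $B$ is already Zariski closed (it is the vanishing locus of all maximal minors of $P_{k,d-k}$), you only need $B\neq S^dV$, not a dimension bound. Your incidence-variety computation of course gives this, but an alternative one often sees in the literature is to exhibit a single $P$ with injective flattening (for instance a sufficiently general sum of $d$-th powers, or working monomially). Your route has the virtue of being uniform and avoiding any explicit construction, at the cost of a slightly longer setup. Either way the statement follows.
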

 For any partition $\lambda$, let $S_\lambda V$  denote the irreducible $GL(V)$-module corresponding to the partition $\lambda$.
\begin{proposition}\label{nontrivial}
For a generic polynomial $P\in S^dV$,
the rank  of
\begin{eqnarray*}
P_{k,d-k}^{\wedge p}:S^kV^*\otimes\Lambda^pV\rightarrow S^{d-k-1}V\otimes \Lambda^{p+1}V
\end{eqnarray*}
is $\dim\ S_{(d-k,1^p)}V=\frac{d}{d-k+p}\binom{2d-k-1}{d}\binom{d-1}{p}$.
When $\lceil\frac{d}{2}\rceil\leq k\leq d-3$,
rank$((x_1\cdots x_d)_{k,d-k}^{\wedge p})$
is less than that of a generic polynomial in $S^dV$.
\end{proposition}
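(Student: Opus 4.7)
The plan is to establish two assertions separately: that $\rank(P^{\wedge p}_{k,d-k})$ equals $\dim S_{(d-k,1^p)}V$ for a generic $P\in S^dV$, and that this generic value strictly exceeds $\mathbf{S}(p,d,k)=\rank((x_1\cdots x_d)^{\wedge p}_{k,d-k})$ whenever $\lceil d/2\rceil\leq k\leq d-3$. Both steps exploit the $GL(V)$-equivariance of the Koszul differential $\wedge_{d-k,p}$; the strict inequality additionally uses the action of the coordinate torus on $V$.

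For the generic rank I would combine Pieri's rule with Schur's lemma, essentially extending the argument already appearing in Lemma \ref{keylemma}. Pieri gives
\begin{eqnarray*}
S^{d-k}V\otimes\Lambda^pV &\cong& S_{(d-k+1,1^{p-1})}V\oplus S_{(d-k,1^p)}V,\\
S^{d-k-1}V\otimes\Lambda^{p+1}V &\cong& S_{(d-k,1^p)}V\oplus S_{(d-k-1,1^{p+1})}V,
\end{eqnarray*}
and Schur's lemma forces $\wedge_{d-k,p}$ to annihilate $S_{(d-k+1,1^{p-1})}V$ and to send $S_{(d-k,1^p)}V$ isomorphically to its copy in the target, so $\mathrm{image}(P^{\wedge p}_{k,d-k})\subseteq S_{(d-k,1^p)}V$ for every $P$, giving the universal upper bound. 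When $\dim V=d$ and $k\geq\lceil d/2\rceil$ one has $\dim S^kV^*\geq\dim S^{d-k}V$, so Theorem \ref{classic} makes $P_{k,d-k}$ surjective for generic $P$; then $\mathrm{image}(P^{\wedge p}_{k,d-k})=\wedge_{d-k,p}(S^{d-k}V\otimes\Lambda^pV)=S_{(d-k,1^p)}V$ in full, yielding the equality. The closed formula asserted in the statement then drops out of the hook content formula applied to the hook shape $(d-k,1^p)$ with $n=d$.

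For the strict inequality I would use a weight-of-monomial obstruction. By Proposition \ref{partial} and Corollary \ref{wedge}, $\mathrm{image}((x_1\cdots x_d)^{\wedge p}_{k,d-k})=\wedge_{d-k,p}((S^{d-k}V)_{\mathrm{reg}}\otimes\Lambda^pV)$, and inspecting $\wedge_{d-k,p}$ on the basis elements $x_I\otimes x_J$ (with $I$ a $(d-k)$-subset and $J$ a $p$-subset of $[d]$) shows every image element is a sum of tensors $x_{I'}\otimes x_{J'}$ with $I'$ a $(d-k-1)$-subset and $J'$ a $(p+1)$-subset of $[d]$. Hence every $T$-weight occurring in this image has each coordinate at most $2$. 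In the range $k\leq d-3$ one has $d-k\geq 3$, so the shape $(d-k,1^p)$ carries a semistandard tableau with three equal entries in its long row (for example, $a_1=a_2=a_3=1$ together with any strictly increasing column of length $p+1$ drawn from $\{2,\dots,d\}$), producing a weight vector in $S_{(d-k,1^p)}V$ with some coordinate equal to $3$. Such a vector cannot appear in the image of $(x_1\cdots x_d)^{\wedge p}_{k,d-k}$, so the inclusion is strict and $\mathbf{S}(p,d,k)<\dim S_{(d-k,1^p)}V$ throughout the asserted range.

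The only point requiring a moment of care is checking that $\wedge_{d-k,p}$ genuinely preserves the coordinate-$\leq 2$ condition on $(S^{d-k}V)_{\mathrm{reg}}\otimes\Lambda^pV$; once this is observed the rest reduces to a bookkeeping statement about hook Kostka numbers, and I do not anticipate any serious obstacle.
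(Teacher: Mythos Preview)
Your proposal is correct and follows essentially the same route as the paper: an identical Pieri/Schur/Theorem~\ref{classic}/hook-content argument for the generic rank, and the same ``each variable appears at most twice'' weight obstruction for the strict inequality. The only cosmetic difference is that instead of invoking a semistandard tableau the paper exhibits the concrete witness $x_1^{d-k-1}\otimes(x_1\wedge x_2\wedge\cdots\wedge x_{p+1})=\frac{1}{d-k}\,\wedge_{d-k,p}\bigl(x_1^{d-k}\otimes x_2\wedge\cdots\wedge x_{p+1}\bigr)$, which lies in $\mathrm{image}(\wedge_{d-k,p})$ but not in the restricted image since $d-k\geq 3$.
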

\begin{proof}
First, assume that $P$ is a generic polynomial in  $S^dV$, and
$\lceil\frac{d}{2}\rceil\leq k$, by Theorem \ref{classic},
$$\rank(P_{k,d-k}^{\wedge p})=\rank(\wedge_{d-k,p}),$$
where \begin{eqnarray*}
\wedge_{d-k,p}:S^{d-k}V\otimes\Lambda^pV\rightarrow S^{d-k-1}V\otimes\Lambda^{p+1}V.
\end{eqnarray*}
By Pieri's rule, $S^{d-k}V\otimes\Lambda^pV=S_{(d-k,1^p)}V\oplus S_{(d-k+1,1^{p-1})}V$
and $S^{d-k-1}V\otimes\Lambda^{p+1}V=S_{(d-k,1^p)}V\oplus S_{(d-k-1,1^{p+1})}V$.
Therefore by Schur's lemma, ${\rm image}(\wedge_{d-k,p})=S_{(d-k,1^p)}V$, and by Hook-content formula, $\rank(\wedge_{d-k,p})=\dim\ S_{(d-k,1^p)}V=\frac{d}{d-k+p}\binom{2d-k-1}{d}\binom{d-1}{p}$.

Second assume  $P=x_1\cdots x_d\in S^dV$ and $\lceil\frac{d}{2}\rceil\leq k\leq d-3$,
by Corollary \ref{wedge},
the image of
\begin{eqnarray*}
P_{k,d-k}^{\wedge p}:S^kV^*\otimes\Lambda^pV\rightarrow S^{d-k-1}V\otimes \Lambda^{p+1}V
\end{eqnarray*}
is the image of $(S^{d-k}V)_{{\rm reg}}\otimes\Lambda^pV$ under the map
\begin{eqnarray*}
\wedge_{d-k,p}:S^{d-k}V\otimes\Lambda^pV\rightarrow S^{d-k-1}V\otimes\Lambda^{p+1}V.
\end{eqnarray*}
 Easy to see that $\wedge_{d-k,p}(x_1^{d-k}\otimes( x_2\wedge\cdots\wedge x_{p+1}))=(d-k)x_1^{d-k-1}\otimes(x_1\wedge x_2\wedge\cdots\wedge x_{p+1})\in {\rm image}(\wedge_{d-k,p})$,
but $x_1^{d-k-1}\otimes(x_1\wedge x_2\wedge\cdots\wedge x_{p+1})$ is not contained in
the image of $\wedge_{d-k,p}|_{(S^{d-k}V)_{{\rm reg}}\otimes\Lambda^pV}$, because $x_1$ appears at most twice in the image and $d-k\geq3$.
The result follows.
\end{proof}
\begin{proof}[Proof of Theorem \ref{rankchow}]
First, by Corollary \ref{wedge}, we only need to compute the rank of
\begin{eqnarray*}
\wedge_{d-k,p}|_{(S^{d-k}V)_{{\rm reg}}\otimes\Lambda^pV}:(S^{d-k}V)_{{\rm reg}}\otimes\Lambda^pV\rightarrow S^{d-k-1}V\otimes\Lambda^{p+1}V.
\end{eqnarray*}
By Lemma \ref{keylemma}, $\rank(\wedge_{d-k,p}|_{W_{k_1,...,k_s;j_1,j_2,\dots,j_{d-k+p-2s}}})$ depends only on $s$.
Consider the decomposition of ${(S^{d-k}V)_{{\rm reg}}\otimes\Lambda^pV}$ in \eqref{decomp3}, for any given $s$, the number of subspaces $W_{k_1,...,k_s;j_1,j_2,\dots,j_{d-k+p-2s}}$ is $\binom{d}{s}\binom{d-s}{d-k+p-2s}$. Therefore, by Lemma \ref{decom} and Lemma \ref{keylemma}.
\begin{eqnarray}
\nonumber\mathbf{S(p,d,k)}&=&\sum_{s=\max\{0,p-k\}}^{\min\{p,d-k\}}\binom{d}{s}\binom{d-s}{d-k+p-2s}\rank(\wedge_{d-k,p}|_{W_{k_1,...,k_s;j_1,j_2,\dots,j_{d-k+p-2s}}})\\
&=&\sum_{s=\max\{0,p-k\}}^{\min\{p,d-k-1\}}\binom{d}{s}\binom{d-s}{d-k+p-2s}\binom{d-k+p-2s-1}{p-s}\\
&=&\frac{d!}{p!(d-p-1)!}\sum_{s=\max\{0,p-k\}}^{\min\{p,d-k-1\}}\frac{\binom{p}{s}\binom{d-1-p}{s+k-p}}{d-k+p-2s}.
\end{eqnarray}
Second, by Proposition \ref{nontrivial}, when $\lceil\frac{d}{2}\rceil\leq k\leq d-3$,
the $(\mathbf{S(p,d,k)}+1) \times (\mathbf{S(p,d,k)}+1)$ minors are in the ideal of $Ch_d(V)$.
\end{proof}

\begin{lemma}\label{YFveronese}
Let $P=l^d$ for some linear form $l\in V$, then $\rank(P_{k,d-k}^{\wedge p})=\binom{d-1}{p}$.
\end{lemma}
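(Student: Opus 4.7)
The plan is to unravel the two-step definition of $P_{k,d-k}^{\wedge p}$ and reduce the rank computation to the rank of exterior multiplication by $l$. First, I would observe that since $P=l^d$, every $k$-th order partial derivative of $P$ is a scalar multiple of $l^{d-k}$, so the image of the polarization $P_{k,d-k}\colon S^kV^*\to S^{d-k}V$ is the one-dimensional subspace $\langle l^{d-k}\rangle$. Consequently the image of $P_{k,d-k}\otimes \mathrm{Id}_{\Lambda^pV}$ is exactly $\langle l^{d-k}\rangle\otimes \Lambda^pV$, so $\rank(P_{k,d-k}^{\wedge p})$ equals the rank of $\wedge_{d-k,p}$ restricted to this subspace.

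Next I would compute that restricted action explicitly. On a pure tensor the algebraic exterior derivative acts by
\[
l^{d-k}\otimes(m_1\wedge\cdots\wedge m_p)\ \longmapsto\ (d-k)\,l^{d-k-1}\otimes (l\wedge m_1\wedge\cdots\wedge m_p),
\]
since all $d-k$ symmetric factors of $l^{d-k}$ are equal to $l$, so each of the $d-k$ terms in the defining sum contributes the same element. Because the hypothesis $1\leq k<d$ forces $d-k\neq 0$, this identifies $\rank(P_{k,d-k}^{\wedge p})$ with the rank of the left-multiplication map
\[
L_l\colon \Lambda^pV\to \Lambda^{p+1}V,\qquad \omega\mapsto l\wedge\omega.
\]

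Finally I would compute $\rank(L_l)$ by a standard splitting argument: choose a complement $V=\langle l\rangle\oplus W$ with $\dim W=d-1$ (we are in the ambient setting $V=\mathbb{C}^d$ fixed throughout this section). Then $\Lambda^pV=\Lambda^pW\oplus (l\wedge \Lambda^{p-1}W)$; the map $L_l$ vanishes on the second summand and carries $\Lambda^pW$ isomorphically onto $l\wedge\Lambda^pW$. Hence $\rank(L_l)=\dim \Lambda^pW=\binom{d-1}{p}$, which gives the claimed formula. There is no serious obstacle; the only points to watch are that $d-k\neq 0$ (guaranteed by $k<d$) and that the image of the polarization is genuinely one-dimensional, which is the content of the opening observation.
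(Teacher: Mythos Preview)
Your argument is correct and matches the paper's proof essentially step for step: the paper also identifies the image as $\operatorname{span}\{l^{d-k-1}\otimes(l\wedge x_{i_1}\wedge\cdots\wedge x_{i_p})\}$ and then counts its dimension, simply taking $l=x_1$ without loss of generality rather than using your general complement $W$. The two presentations differ only in the level of detail.
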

\begin{proof}
Let $P=l^d$, the image of
\begin{eqnarray*}
P_{k,d-k}^{\wedge p}:S^kV^*\otimes\Lambda^pV\rightarrow S^{d-k-1}V\otimes \Lambda^{p+1}V
\end{eqnarray*}
is span$\{l^{d-k-1}\otimes (l\wedge x_{i_1}\wedge\cdots\wedge x_{i_p})\}$. We can assume $l=x_1$, then
$\rank(P_{k,d-k}^{\wedge p})=\binom{d-1}{p}$.
\end{proof}
\begin{proof}[Proof of Theorem \ref{chowsrank}]
When $d=2n+1$ and $k=p=n$,
\begin{equation}
\mathbf{S(n,2n+1,n)}=\frac{(2n+1)!}{(n!)^2}\sum_{s=0}^{n}\frac{\binom{n}{s}^2}{1+2s}.
\end{equation}
and  by Lemma \ref{YFveronese}, $\underline{\mathbf{R}}_S(x_1\cdots x _{2n})\geq\frac{\mathbf{S(n,2n+1,n)}}{\binom{2n}{n}}$.

Let$$\mathbf{\mathbf{A}}=\sum_{s=0}^{n}\frac{\binom{n}{s}^2}{1+2s}.$$
Then\begin{eqnarray*}
2\mathbf{\mathbf{A}}&=&\sum_{s=0}^{n}\frac{\binom{n}{s}^2}{1+2s}+\sum_{s=0}^{n}\frac{\binom{n}{s}^2}{1+2n-2s}\\
&=&\sum_{s=0}^{n}\binom{n}{s}^2(\frac{1}{1+2s}+\frac{1}{1+2n-2s})\\
&=&\sum_{s=0}^{n}\binom{n}{s}^2\frac{2+2n}{(1+2s)(1+2n-2s)}.\\
\end{eqnarray*}

Notice that $$(1+2s)(1+2n-2s)\leq(1+n)^2.$$

So\begin{eqnarray*}
\mathbf{\mathbf{A}}&=&\sum_{s=0}^{n}\binom{n}{s}^2\frac{1+n}{(1+2s)(1+2n-2s)}\\
&=&\sum_{s=0}^{n}\binom{n}{s}^2(\frac{1+n}{(1+2s)(1+2n-2s)}-\frac{1}{1+n}+\frac{1}{1+n})\\
&=&\sum_{s=0}^{n}\binom{n}{s}^2(\frac{n^2-4ns+4s^2}{(n+1)(1+2s)(1+2n-2s)}+\frac{1}{1+n})\\
&\geq&\sum_{s=0}^{n}\binom{n}{s}^2(\frac{n^2-4ns+4s^2}{(n+1)^3}+\frac{1}{1+n})\\
&=&\sum_{s=0}^{n}\binom{n}{s}^2\frac{n^2-4ns+4s^2}{(n+1)^3}+\binom{2n}{n}\frac{1}{1+n}.\\
\end{eqnarray*}
Since $$\sum_{s=0}^{n}\binom{n}{s}^2=\binom{2n}{n},$$
$$\sum_{s=0}^{n}s\binom{n}{s}^2=n\binom{2n-1}{n},$$
and$$\sum_{s=0}^{n}s(s-1)\binom{n}{s}^2=n(n-1)\binom{2n-2}{n},$$
we have $$\sum_{s=0}^{n}s^2\binom{n}{s}^2=n(n-1)\binom{2n-2}{n}+n\binom{2n-1}{n}.$$

This implies \begin{eqnarray*}
\mathbf{\mathbf{A}}&\geq&\sum_{s=0}^{n}\binom{n}{s}^2\frac{n^2-4ns+4s^2}{(n+1)^3}+\binom{2n}{n}\frac{1}{1+n}\\
&=&\frac{n^2\binom{2n}{n}+4n(n-1)\binom{2n-2}{n}+4n\binom{2n-1}{n}-4n^2\binom{2n-1}{n}}{(n+1)^3}+\binom{2n}{n}\frac{1}{1+n}\\
&=&\frac{\binom{2n}{n}}{(n+1)^3}[n^2+4n(n-1)\frac{n(n-1)}{2n(2n-1)}+(4n-4n^2)\frac{n}{2n}]+\binom{2n}{n}\frac{1}{1+n}\\
&=&\frac{\binom{2n}{n}}{(n+1)^3}(\frac{2n(n-1)^2}{2n-1}-n^2+2n)+\binom{2n}{n}\frac{1}{1+n}\\
&=&\frac{\binom{2n}{n}}{(n+1)^3}\frac{2n(n-1)^2-(n^2-2n)(2n-1)}{2n-1}+\binom{2n}{n}\frac{1}{1+n}\\\\
&=&\frac{\binom{2n}{n}}{(n+1)^3}\frac{n^2}{2n-1}+\binom{2n}{n}\frac{1}{1+n}.\\
\end{eqnarray*}

Therefore
\begin{eqnarray*}
\frac{\mathbf{S(n,2n+1,n)}}{\binom{2n}{n}\binom{2n+1}{n}}&=&\frac{(2n+1)!}{(n!)^2}\frac{\mathbf{\mathbf{A}}}{\binom{2n}{n}\binom{2n+1}{n}}\\
&\geq&\frac{(2n+1)!}{(n!)^2}\frac{\frac{\binom{2n}{n}}{(n+1)^3}\frac{n^2}{2n-1}+\binom{2n}{n}\frac{1}{1+n}}{\binom{2n}{n}\binom{2n+1}{n}}\\
&=&\frac{(2n+1)!}{(n!)^2}\frac{1}{\binom{2n+1}{n}}(\frac{1}{1+n}+\frac{n^2}{(n+1)^3(2n-1)})\\
&=&(n+1)(\frac{1}{1+n}+\frac{n^2}{(n+1)^3(2n-1)})\\
&=&1+\frac{n^2}{(n+1)^2(2n-1)}.\\
\end{eqnarray*}
Therefore $\underline{\mathbf{R}}_S(x_1\cdots x _{2n})\geq\frac{\mathbf{S(n,2n+1,n)}}{\binom{2n}{n}}\geq\binom{2n+1}{n}(1+\frac{n^2}{(n+1)^2(2n-1)}).$
\end{proof}
\subsection{Koszul Young flattenings of secant varieties of Chow varieties}\label{yschow}
I study equations for secant varieties of Chow varieties from the perspective of Koszul Young flattenings.

\begin{proposition}
 Let $ V=\mathbb{C}^{dr}$ with a basis $\{x_1,\dots,x_{dr}\}$. Let $P=x_1\cdots x_d+x_{d+1}\cdots x_{2d}+\cdots+x_{(r-1)d+1}\cdots x_{rd}$. For any $k\leq\lfloor\frac{d}{2}\rfloor$,
 the map $P_{k,d-k}:S^kV^*\rightarrow S^{d-k}V$ has rank
$$\rank(P_{k,d-k})=r\binom{d}{k}.$$
Therefore the $(r\binom{d}{k}+1)\times (r\binom{d}{k}+1)$ minors of the linear map $P_{k,d-k}$ are in the ideal of $\sigma_r(Ch_d(V))$.
In particular, when $k=2$ and $d\geq4$, $r\binom{d}{2}<\binom{rd+1}{2}$, so we obtain equations for $\sigma_r(Ch_d(V))$
of degree $r\binom{d}{2}+1$.
\end{proposition}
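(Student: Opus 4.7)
My plan is to exploit the block-disjoint structure of $P$. Decompose $V = V_1 \oplus \cdots \oplus V_r$ where $V_i := \operatorname{span}\{x_{(i-1)d+1}, \ldots, x_{id}\}$, and set $P_i := x_{(i-1)d+1} \cdots x_{id} \in S^d V_i \subset S^d V$, so that $P = P_1 + \cdots + P_r$. Since polarization is linear in the polynomial argument, $P_{k,d-k} = \sum_{i=1}^{r}(P_i)_{k,d-k}$ as linear maps $S^k V^* \to S^{d-k} V$. The goal is to show that these $r$ summands have images in distinct direct summands of $S^{d-k}V$, so that their ranks add.

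For each $i$, since $P_i$ involves only variables from $V_i$, every partial derivative of $P_i$ with respect to a variable outside $V_i$ vanishes; hence the image of $(P_i)_{k,d-k}$ lies in $S^{d-k} V_i \subset S^{d-k} V$. Applying Proposition~\ref{partial} to the $d$-dimensional space $V_i$ identifies this image with $(S^{d-k} V_i)_{{\rm reg}}$, of dimension $\binom{d}{d-k} = \binom{d}{k}$. The block decomposition $V = \bigoplus_i V_i$ induces $S^{d-k}V = \bigoplus_{\alpha}\bigotimes_{i} S^{\alpha_i} V_i$ indexed by compositions $\alpha$ of $d-k$ into $r$ nonnegative parts, and the image of $(P_i)_{k,d-k}$ lies in the single summand with $\alpha_i = d-k$ and $\alpha_j = 0$ for $j \ne i$. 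These summands are pairwise distinct, so the sum of images is direct and $\operatorname{rank}(P_{k,d-k}) = r\binom{d}{k}$; equivalently, $P_{k,d-k}$ is a disjoint map in the sense of Section~\ref{disjoint}.

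For the equations claim I would use upper semicontinuity of rank. Any $[Q] \in Ch_d(V)$ has flattening rank at most $\binom{d}{k}$, since the $k$-th order partial derivatives of $l_1\cdots l_d$ are linear combinations of the $\binom{d}{k}$ products $\prod_{j \notin S} l_j$ over $k$-subsets $S \subset \{1,\ldots,d\}$. By subadditivity of rank under sums, $\operatorname{rank}(Q_{k,d-k}) \leq r\binom{d}{k}$ for every $[Q] \in \sigma_r^0(Ch_d(V))$, and since the vanishing of all $(r\binom{d}{k}+1)$-minors is a Zariski-closed condition, it persists on the closure $\sigma_r(Ch_d(V))$. For nontriviality when $k=2$ and $d\geq 4$, the inequality $r\binom{d}{2} < \binom{rd+1}{2}$ reduces to $r(d-1) < rd+1$, which is immediate; the rank computation above further shows the bound is attained at our specific $P$, so the minors are genuinely new equations rather than a vacuous condition. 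No serious obstacle is anticipated—the key conceptual step is the direct-sum observation in paragraph two, and everything else is standard bookkeeping.
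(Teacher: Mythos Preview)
The paper states this proposition without proof, so there is nothing to compare against directly; your argument is correct and is exactly the kind of proof the paper's setup invites. You use the block decomposition $V=\bigoplus_i V_i$ together with Proposition~\ref{partial} to identify each image $(P_i)_{k,d-k}(S^kV^*)=(S^{d-k}V_i)_{\rm reg}$, and then observe that these images sit in pairwise distinct summands of $S^{d-k}V$, which is precisely an instance of the ``disjoint map'' notion from \S\ref{disjoint}. One minor slip: your simplification of $r\binom{d}{2}<\binom{rd+1}{2}$ should read $d-1<rd+1$ (divide both sides by $rd/2$), not $r(d-1)<rd+1$; both inequalities are trivially true, so nothing is affected. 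Also, for the nontriviality of the equations you should invoke Theorem~\ref{classic} (generic flattening rank equals $\binom{rd+1}{2}$) rather than the fact that the bound is attained at your particular $P$---attainment on $\sigma_r(Ch_d(V))$ alone does not show the minors are nonzero on $S^dV$.
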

\begin{theorem}\label{kyfl11}
 Let $ V=\mathbb{C}^{n}$ with a basis $\{x_1,\dots,x_{n}\}$, for a generic polynomial $P\in S^dV$, the rank of $ P_{1,d-1}^{\wedge 1}:V^*\otimes V\rightarrow S^{d-1}V\otimes \Lambda^{2}V$ is $n^2-1$.
\end{theorem}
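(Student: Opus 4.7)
First I would unwind the definition: since $\wedge_{d-1,1}$ sends $Q\otimes v\in S^{d-1}V\otimes V$ to $\sum_k (\partial_{x_k}Q)\otimes(x_k\wedge v)$, composing with $P_{1,d-1}\otimes{\rm Id}$ gives $P_{1,d-1}^{\wedge 1}(\xi\otimes v) = \sum_{k}(\partial_\xi\partial_{x_k}P)\otimes(x_k\wedge v)$. Writing $A=(a_{ij})\in\Mat_n$ for $\sum_{i,j}a_{ij}x_i^{*}\otimes x_j$ and $H_{ij}:=\partial_{x_i}\partial_{x_j}P\in S^{d-2}V$, one obtains
\begin{eqnarray*}
P_{1,d-1}^{\wedge 1}(A) = \sum_{j<k}\Bigl(\sum_i a_{ij}H_{ik}-a_{ik}H_{ij}\Bigr)\otimes(x_k\wedge x_j),
\end{eqnarray*}
so $A\in\ker P_{1,d-1}^{\wedge 1}$ iff $A^{\top}H=HA$ as matrices in $\Mat_n(S^{d-2}V)$, where $H=(H_{ij})$ is the Hessian of $P$. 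Since $H=H^{\top}$, the identity $A=I_n$ always satisfies $A^{\top}H=H=HA$, so $I_n\in\ker P_{1,d-1}^{\wedge 1}$ and $\rank(P_{1,d-1}^{\wedge 1})\leq n^2-1$ for every $P$.

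The rank is lower semi-continuous in $P$, so to prove equality for generic $P$ it suffices to exhibit a single $P_0\in S^dV$ with $\ker=\mathbb{C}\cdot I_n$. I would take $P_0=e_d(x_1,\ldots,x_n)$, the $d$-th elementary symmetric polynomial (which for $n=d$ specializes to the Chow polynomial $x_1\cdots x_d$); its Hessian satisfies $H_{ii}=0$ and $H_{ij}=e_{d-2}(\hat{x_i},\hat{x_j})$ for $i\neq j$. For fixed $j\neq k$, comparing coefficients of a square-free monomial $\prod_{\ell\in S}x_\ell$ with $|S|=d-2$ in the identity $(A^{\top}H-HA)_{jk}=0$ gives, with $T:=[n]\setminus S$ of size $n-d+2$, the relation
\begin{eqnarray*}
[k\in T]\sum_{i\in T\setminus\{k\}}a_{ij}\;=\;[j\in T]\sum_{i\in T\setminus\{j\}}a_{ik}.
\end{eqnarray*}

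When exactly one of $j,k$ lies in $T$, this reduces to $\sum_{i\in T\setminus\{j\}}a_{ik}=0$; varying the admissible pairs $(T,j)$ and taking differences forces each $a_{ik}$ with $i\neq k$ to vanish, and symmetrically $a_{ij}=0$ for $i\neq j$, so all off-diagonal entries of $A$ must be zero. The remaining case $j,k\in T$ then reduces to $a_{jj}=a_{kk}$, which by varying $T$ holds for all $j,k$, forcing $A=\lambda I_n$ and hence $\ker=\mathbb{C}\cdot I_n$ at $P_0$; the theorem follows by semi-continuity. The main technical difficulty lies in the monomial-matching bookkeeping and in verifying that enough sets $T$ exist to enforce each successive constraint, which requires the mild hypothesis $n\geq d\geq 3$; note that some such hypothesis is necessary because for $d=2$ the Hessian has scalar entries and the kernel is $\binom{n+1}{2}$-dimensional rather than $1$-dimensional.
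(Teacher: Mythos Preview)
Your proof is correct under the hypothesis $n\geq d\geq 3$ that you flag, and it takes a genuinely different route from the paper's argument. The reformulation of the kernel as $\{A:\ A^{\top}H=HA\}$ via the Hessian $H$ is an elegant observation; it makes the upper bound $\rank\leq n^2-1$ transparent (since $I_n$ always lies in the kernel), whereas the paper obtains the same bound by directly evaluating $P_{1,d-1}^{\wedge 1}(\sum_i x_i^*\otimes x_i)=0$. For the lower bound the two arguments diverge: the paper chooses the witness $P=x_1^d+\cdots+x_n^d+x_1^{d-1}(x_2+\cdots+x_n)$ and exhibits $n^2-1$ linearly independent vectors in the image, while you choose $P_0=e_d(x_1,\dots,x_n)$ and show the kernel is exactly $\mathbb{C}\,I_n$ by comparing coefficients of square-free monomials. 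Your choice is thematically pleasant because for $n=d$ it specializes to the Chow monomial $x_1\cdots x_d$ itself, and the combinatorics of the sets $T$ is clean; your ``taking differences'' step indeed goes through, since for $d\geq 3$ the constraints $\sum_{i\in U}a_{ik}=0$ hold for \emph{every} $U\subset[n]\setminus\{k\}$ of size $n-d+1$, and swapping one element of $U$ shows all off-diagonal entries in a column coincide, hence vanish.

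The trade-off is range of validity: because $e_d\equiv 0$ when $n<d$, your witness only works for $n\geq d$, whereas the paper's witness gives $\ker=\mathbb{C}\,I_n$ for all $n\geq 2$ and $d\geq 3$ (one checks, for instance, that $n=2$, $d\geq 4$ already succeeds). In the paper's intended application one has $\dim V=dr\geq 2d$, so your extra hypothesis is harmless there. Your remark that $d=2$ is a genuine exception is well taken: in that case the target $S^{d-2}V\otimes\Lambda^2V\cong\Lambda^2V$ has dimension $\binom{n}{2}<n^2-1$, so the statement needs $d\geq 3$.
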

\begin{proof}
First, let $P\in S^dV$, then
$$P_{1,d-1}^{\wedge 1}(\sum_{i=1}^nx_i^*\otimes x_i)=\sum_{i=1}^n\sum_{j=1}^n P_{x_ix_j}\otimes (x_j\wedge x_i)=0$$
so $\rank( P_{1,d-1}^{\wedge 1})\leq n^2-1$.

Second, let $P=x_1^d+\cdots+x_n^d+x_1^{d-1}(x_2+\cdots+x_n)$,
then
$P_{1,d-1}^{\wedge 1}(x_i^*\otimes x_k)$ is
\begin{eqnarray*}
\begin{cases}
d(d-1)x_i^{d-2}\otimes (x_i\wedge x_k)+d(d-1)x_1^{d-2}\otimes (x_1\wedge x_k)\  i=2,\cdots,n,\\
d(d-1)x_1^{d-2}\otimes (x_1\wedge x_k)+\sum_{j=2}^d[(d-1)x_1^{d-2}\otimes (x_j\wedge x_k)+(d-1)(d-2)x_1^{d-3}x_j\otimes (x_1\wedge x_k)\ i=1.
\end{cases}
\end{eqnarray*}
If we choose the $n^2-1$ dimensional subspace $A_n$ of $V^*\otimes V$ with basis indexed by $x_i^*\otimes x_k$ for $k$ and $i$ are not both 1,
and choose the $n^2-1$ dimensional subspace $B_n$ of $S^{d-1}V\otimes \Lambda^{2}V$ with basis  indexed by $x_i^{d-2}\otimes (x_i\wedge x_k)$ for $i=2,\cdots,n$ and $k\neq i$,
 $x_1^{d-2}\otimes (x_1\wedge x_k)$ for $k\neq1$, and $x_1^{d-3}x_2\otimes (x_1\wedge x_k)$ for $k\neq1$,
 then by projection to $B_n$, we can rewrite $P_{1,d-1}^{\wedge 1}(x_i^*\otimes x_k)$ as
\begin{eqnarray*}
\begin{cases}
d(d-1)x_i^{d-2}\otimes (x_i\wedge x_k)+d(d-1)x_1^{d-2}\otimes (x_1\wedge x_k)\  i=2,\cdots,n,\\
d(d-1)x_1^{d-2}\otimes (x_1\wedge x_k)+(d-1)(d-2)x_1^{d-3}x_2 \otimes (x_1\wedge x_k)]\ i=1,\ k\neq1.
\end{cases}
\end{eqnarray*}
It is easy to see they are linearly independent, so  $\rank( P_{1,d-1}^{\wedge 1})= n^2-1$, the result follows.
\end{proof}
While we can not obtain equations of $\sigma_r(Ch_3(\mathbb{C}^{3r}))$ just by usual flattenings, we
can  obtain equations for $\sigma_r(Ch_3\mathbb{C}^{3r})$ by Koszul Young flattenings in Theorem \ref{rankschow}.

\begin{proof}[Proof of Theorem \ref{rankschow}]
First, let $V_i=\spa\{x_{(i-1)d+1},\dots,x_{(i-1)d+d}\}$, for $i=1,2,\dots,r$.
Then $V=\oplus_{i=1}^rV_i$. The image of the map
\begin{eqnarray*}
P_{k,d-k}^{\wedge p}:S^kV^*\otimes\Lambda^pV\rightarrow S^{d-k-1}V\otimes \Lambda^{p+1}V
\end{eqnarray*}
is the image of the map
\begin{eqnarray*}
\wedge_{d-k,p}:\bigoplus_{i=1}^r(S^{d-k}V_i)_{{\rm reg}}\otimes\Lambda^pV\rightarrow S^{d-k-1}V\otimes\Lambda^{p+1}V.
\end{eqnarray*}
 Write $\Lambda^pV=\Lambda^pV_i\oplus W_i$, where $W_i$ is the complement of $\Lambda^pV_i$ with respect to the basis $\{x_{i_1}\wedge\cdots\wedge x_{i_p}\}_{1< i_1< i_2<\cdots< i_p\leq dr}$.
 Rewrite the map as
 \begin{eqnarray*}
 \wedge_{d-k,p}:\bigoplus_{i=1}^r(S^{d-k}V_i)_{{\rm reg}}\otimes(\Lambda^pV_i\oplus W_i)\rightarrow S^{d-k-1}V\otimes\Lambda^{p+1}V.
 \end{eqnarray*}
 Then
 \begin{eqnarray*}
\rank(P_{k,d-k}^{\wedge p})&\leq&\sum_{i=1}^r\rank(\wedge_{d-k,p}|_{(S^{d-k}V_i)_{{\rm reg}}\otimes\Lambda^pV_i})+\rank(\wedge_{d-k,p}|_{(S^{d-k}V_i)_{{\rm reg}}\otimes W_i})\\
&\leq& r[\mathbf{S(p,k,d)}+\binom{d}{k}(\binom{dr}{p}-\binom{d}{p})].
\end{eqnarray*}
Second, when $d\geq2$, $r\geq2$ and $p=k=1$,
$$ \rank (P_{1,d-1}^{\wedge1})\leq d^2r^2-r.$$
By Theorem \ref{kyfl11}, the $(d^2r^2-r+1)\times(d^2r^2-r+1)$ minors of $P_{1,d-1}^{\wedge1}$ are in the ideal of
$\sigma_r(Ch_d(V))$.
\end{proof}
\section{Flattenings of Veronese reembeddings of secant varieties of Veronese varieties}\label{fvsv}
\subsection{Flattenings of Veronese reembeddings of secant varieties of Veronese varieties}\label{cfvsv}
Let $\{x_1,\dots,x_r\}$ be a basis of $V$, and $\{y_1,\dots,y_r\}$ be the dual basis of $V^*$. Let $P=(x_1^{\delta_2}+\cdots+ x_r^{\delta_2})^{\delta_1}\in S^dV$, where $d=\delta_1\delta_2$, note that $[P]$ is a generic element of $v_{\delta_1}(\sigma_r(v_{\delta_2}(\mathbb{P}^{r-1})))$. The goal is to compute the rank of its $(k, d-k)$-flattening $P_{k,d-k}:S^kV^*\rightarrow S^{d-k}V,$ where $ 1\leq k<d$.
\begin{definition}
Let $y_1^{\alpha_1}\cdots y_r^{\alpha_r}\in S^kV^*$,
the {\it support} of $P_{k,d-k}(y_1^{\alpha_1}\cdots y_r^{\alpha_r})$ is the set of all monomials
appearing in $P_{k,d-k}(y_1^{\alpha_1}\cdots y_r^{\alpha_r})$.
\end{definition}
\begin{example}
Consider $\delta_1=\delta_2=r=k=2$, then $P=(x_1^2+x_2^2)^2$,
$P_{2,2}(y_1^2)=12x_1^2+x_2^2$, the support of $ P_{2,2}(y_1^2)$ is $\{x_1^2,x_2^2\}$,
Similarly the support of $P_{2,2}(y_2^2)$ is $\{x_1^2,x_2^2\}$, and the support of $ P_{2,2}(y_1y_2)$ is $\{x_1x_2\}$.
\end{example}
\begin{proposition}\label{class}Let $y_1^{\alpha_1}\cdots y_r^{\alpha_r}$ and $y_1^{\eta_1}\cdots y_r^{\eta_r}\in S^kV^*$, then
$P_{k,d-k}(y_1^{\alpha_1}\cdots y_r^{\alpha_r})$ and $P_{k,d-k}(y_1^{\eta_1}\cdots y_r^{\eta_r})$ have
the same support in $S^{d-k}V$ if and only if  $\alpha_i-\eta_i=n_i\delta_2$ for some integers $n_i$ and for each $i=1,\dots,r$.
\end{proposition}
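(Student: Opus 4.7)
The plan is to expand $P$ by the multinomial theorem and describe the support of $P_{k,d-k}(y_1^{\alpha_1}\cdots y_r^{\alpha_r})$ explicitly, then verify both directions by matching exponent vectors. Concretely,
\begin{equation*}
P = \sum_{\beta_1+\cdots+\beta_r=\delta_1}\binom{\delta_1}{\beta_1,\dots,\beta_r}\,x_1^{\beta_1\delta_2}\cdots x_r^{\beta_r\delta_2},
\end{equation*}
so every monomial in $P$ has every exponent divisible by $\delta_2$. Applying the differential operator $\partial^k/(\partial x_1^{\alpha_1}\cdots\partial x_r^{\alpha_r})$ term by term, I read off that the support of $P_{k,d-k}(y_1^{\alpha_1}\cdots y_r^{\alpha_r})$ is exactly
\begin{equation*}
\mathcal{S}(\alpha)=\bigl\{\,x_1^{\beta_1\delta_2-\alpha_1}\cdots x_r^{\beta_r\delta_2-\alpha_r}\ :\ \beta_1+\cdots+\beta_r=\delta_1,\ \beta_i\delta_2\ge\alpha_i\ \forall i\,\bigr\},
\end{equation*}
because the multinomial coefficient and the falling factorials attached to each surviving monomial are strictly positive.

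For the forward direction, if $\mathcal{S}(\alpha)=\mathcal{S}(\eta)$ is nonempty, pick any monomial in $\mathcal{S}(\alpha)$, say with exponent vector $(\beta_i\delta_2-\alpha_i)_i$; it must coincide with a monomial $(\gamma_i\delta_2-\eta_i)_i$ in $\mathcal{S}(\eta)$, so $\alpha_i-\eta_i=(\beta_i-\gamma_i)\delta_2$ is an integer multiple of $\delta_2$, giving the required $n_i=\beta_i-\gamma_i$.

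For the backward direction, assume $\alpha_i-\eta_i=n_i\delta_2$ for each $i$. Since $\sum_i\alpha_i=\sum_i\eta_i=k$, summing forces $\sum_i n_i=0$. Given any monomial $x_1^{\beta_1\delta_2-\alpha_1}\cdots x_r^{\beta_r\delta_2-\alpha_r}\in\mathcal{S}(\alpha)$, set $\gamma_i:=\beta_i-n_i$. Then $\sum_i\gamma_i=\delta_1$, and $\gamma_i\delta_2-\eta_i=\beta_i\delta_2-\alpha_i\ge 0$, so $\gamma_i\ge \eta_i/\delta_2\ge 0$ and the constraint $\gamma_i\delta_2\ge\eta_i$ also holds. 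Hence the same monomial lies in $\mathcal{S}(\eta)$, and by symmetry $\mathcal{S}(\alpha)=\mathcal{S}(\eta)$.

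I do not foresee any serious obstacle: the statement is essentially a bookkeeping fact about congruence classes of exponent vectors modulo $\delta_2$. The only mildly subtle point is ensuring in the backward direction that the shifted $\gamma_i$ stay in the valid range $\gamma_i\ge 0$, $\gamma_i\delta_2\ge\eta_i$, which follows automatically from the inequality $\beta_i\delta_2\ge\alpha_i$ defining membership in $\mathcal{S}(\alpha)$. One should also note the empty-support case: if $\mathcal{S}(\alpha)=\emptyset$ (i.e.\ some $\alpha_i$ exceeds $\delta_1\delta_2$), then the congruence condition is vacuous on one side, but the same congruence argument shows $\mathcal{S}(\eta)=\emptyset$ as well, so the equivalence is still valid after interpreting ``same support'' appropriately.
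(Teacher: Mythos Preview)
Your proof is correct and follows exactly the paper's route: expand $P$ by the multinomial theorem, read off the support of each partial derivative as a set of shifted exponent vectors, and compare. Your write-up is in fact more careful than the paper's own proof, which simply displays the expansion and asserts the conclusion without separating the two directions. One small inaccuracy worth fixing: your parenthetical description of when $\mathcal{S}(\alpha)=\emptyset$ is off---emptiness occurs precisely when $\sum_i\lceil\alpha_i/\delta_2\rceil>\delta_1$, whereas the condition ``some $\alpha_i>\delta_1\delta_2$'' can never hold since $\alpha_i\le k<d=\delta_1\delta_2$; this does not affect your main argument, and the forward direction genuinely needs nonempty support (a hypothesis the paper also leaves implicit).
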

\begin{proof}
\begin{eqnarray*}
P&=&(x_1^{\delta_2}+\cdots+ x_r^{\delta_2})^{\delta_1}\\
&=&\sum_{t_1+\cdots+t_r=\delta_1}\binom{\delta_1}{t_1,\cdots,t_r}x_1^{t_1\delta_2}x_2^{t_2\delta_2}\cdots x_r^{t_r\delta_2}.\\
\end{eqnarray*}
Therefore
\begin{eqnarray*}
P_{k,d-k}(y_1^{\alpha_1}\cdots y_r^{\alpha_r})=\sum_{t_1+\cdots+t_r=\delta_1}C(t_1,\dots,t_r;\alpha_1,\dots,\alpha_r)x_1^{t_1\delta_2-\alpha_1}x_2^{t_2\delta_2-\alpha_2}\cdots x_r^{t_r\delta_2-\alpha_r}.
\end{eqnarray*}
Where $C(t_1,\dots,t_r;\alpha_1,\dots,\alpha_r)$ are coefficients depending on ($t_1,\dots,t_r$) and ($\alpha_1,\dots,\alpha_r$).
\begin{eqnarray*}
P_{k,d-k}(y_1^{\eta_1}\cdots y_r^{\eta_r})=\sum_{t_1+\cdots+t_r=\delta_1}C(t_1,\dots,t_r;\eta_1,\dots,\eta_r)x_1^{t_1\delta_2-\eta_1}x_2^{t_2\delta_2-\eta_2}\cdots x_r^{t_r\delta_2-\eta_r}
\end{eqnarray*}
Therefore $P_{k,d-k}(y_1^{\alpha_1}\cdots y_r^{\alpha_r})$ and $P_{k,d-k}(y_1^{\eta_1}\cdots y_r^{\eta_r})$ have
the same support in $S^{d-k}V$ if and only if  $\alpha_i-\eta_i=n_i\delta_2$ for some integers $n_i$ and for each $i=1,\dots,r$.
\end{proof}

\begin{definition}  For  $\alpha_1+\cdots+\alpha_r=k$, define the subspace $A[\alpha_1,\dots,\alpha_r]\subset S^kV^*$ by
\begin{eqnarray*}
&&A[\alpha_1,\dots,\alpha_r]=\\
&&{\rm span}\{y_1^{\eta_1}\cdots y_r^{\eta_r}\in S^kV^*|\alpha_i-\eta_i=n_i\delta_2\ {\rm for\ some\ integer\ } n_i{\rm\ and\ for\ each\ } i=1,\cdots,r\}.
\end{eqnarray*}
Define $B[\alpha_1,\dots,\alpha_r]$ to be the subspace of $S^{d-k}V$ spanned by the support of $P_{k,d-k}(y_1^{\alpha_1}\cdots y_r^{\alpha_r})$.
\end{definition}

With the notation above,  $S^kV^*$ can be decomposed as a direct sum of subspaces $A[\alpha_1,\cdots,\alpha_r]$. By Proposition \ref{class},
weights of vectors in distinct $B[\alpha_1,\dots,\alpha_r]$ are distinct, therefore $P_{k,d-k}$ is a disjoint map with respect to this decomposition.

\begin{definition}
For each subspace $A[\alpha_1,\dots,\alpha_r]$, write $\alpha_i=\beta_i+s_i\delta_2$, where $0\leq\beta_i<\delta_2$. It is easy to see each $\beta_i$ and $\sum_{i=1}^rs_i$ are the same for each basis vector.
Define two  functions
\begin{equation}
A(\alpha_1,\dots,\alpha_r)=\sum_{i=1}^rs_i=\sum_{i=1}^r\lfloor\frac{\alpha_i}{\delta_2}\rfloor.
\end{equation}
and \begin{equation}
B(\alpha_1,\dots,\alpha_r)=\delta_1-\sum_{i=1}^r\lceil\frac{\alpha_i}{\delta_2}\rceil.
\end{equation}
\end{definition}

One can check $0\leq A(\alpha_1,\cdots,\alpha_r)\leq \lfloor\frac{k}{\delta_2}\rfloor$ and
\begin{equation}\label{AB}
B(\alpha_1,\dots,\alpha_r)=\delta_1-A(\alpha_1,\dots,\alpha_r)-\#\{\beta_i>0\}
\end{equation}
\begin{proposition}
Consider $$P_{k,d-k}|_{A[\alpha_1,\dots,\alpha_r]}:A[\alpha_1,\dots,\alpha_r]\rightarrow B[\alpha_1,\dots,\alpha_r],$$
then $\dim\ A[\alpha_1,\dots,\alpha_r]=\binom{A(\alpha_1,\dots,\alpha_r)+r-1}{A(\alpha_1,\dots,\alpha_r)}$ and $\dim\  B[\alpha_1,\dots,\alpha_r]=\binom{B(\alpha_1,\dots,\alpha_r)+r-1}{B(\alpha_1,\dots,\alpha_r)}.$
\end{proposition}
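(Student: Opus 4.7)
The plan is to exhibit, for each of the two subspaces, an explicit monomial basis parametrized by $r$-tuples of nonnegative integers with a prescribed sum, and then invoke the stars-and-bars identity $\#\{(u_1,\dots,u_r)\in\mathbb{Z}_{\ge 0}^r : u_1+\cdots+u_r=N\}=\binom{N+r-1}{N}$. The relevant value of $N$ will be $A(\alpha_1,\dots,\alpha_r)$ in the first case and $B(\alpha_1,\dots,\alpha_r)$ in the second.

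For $A[\alpha_1,\dots,\alpha_r]$, I would fix the decomposition $\alpha_i=\beta_i+s_i\delta_2$ with $0\le\beta_i<\delta_2$. A monomial $y_1^{\eta_1}\cdots y_r^{\eta_r}\in S^kV^*$ lies in $A[\alpha_1,\dots,\alpha_r]$ precisely when $\eta_i\equiv\beta_i\pmod{\delta_2}$ and $\eta_i\ge 0$ for each $i$, so it is uniquely of the form $\eta_i=\beta_i+t_i\delta_2$ with $t_i\in\mathbb{Z}_{\ge 0}$. Comparing the degree constraint $\sum\eta_i=k=\sum\alpha_i$ forces $\sum t_i=\sum s_i=A(\alpha_1,\dots,\alpha_r)$, and stars-and-bars immediately yields the desired $\dim A[\alpha_1,\dots,\alpha_r]=\binom{A(\alpha_1,\dots,\alpha_r)+r-1}{A(\alpha_1,\dots,\alpha_r)}$.

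For $B[\alpha_1,\dots,\alpha_r]$ I would expand $P$ by the multinomial theorem and apply the operator $\partial^k/(\partial x_1^{\alpha_1}\cdots\partial x_r^{\alpha_r})$ term by term, as already initiated in the proof of Proposition~\ref{class}. Each summand $x_1^{t_1\delta_2}\cdots x_r^{t_r\delta_2}$ with $\sum t_i=\delta_1$ differentiates to the monomial $x_1^{t_1\delta_2-\alpha_1}\cdots x_r^{t_r\delta_2-\alpha_r}$, scaled by the product $\prod_i (t_i\delta_2)!/(t_i\delta_2-\alpha_i)!$, which is strictly positive in characteristic zero precisely when $t_i\delta_2\ge\alpha_i$ for every $i$, i.e.\ $t_i\ge\lceil\alpha_i/\delta_2\rceil$; all other terms vanish. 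Substituting $u_i=t_i-\lceil\alpha_i/\delta_2\rceil\ge 0$ converts the two constraints into the single condition $\sum u_i=\delta_1-\sum\lceil\alpha_i/\delta_2\rceil=B(\alpha_1,\dots,\alpha_r)$. Since distinct tuples $(u_1,\dots,u_r)$ produce distinct exponent vectors and hence pairwise linearly independent monomials in $S^{d-k}V$, a second application of stars-and-bars gives $\dim B[\alpha_1,\dots,\alpha_r]=\binom{B(\alpha_1,\dots,\alpha_r)+r-1}{B(\alpha_1,\dots,\alpha_r)}$.

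The one point worth verifying carefully is the absence of cancellation in the support: this is automatic, since distinct tuples $(t_1,\dots,t_r)$ give distinct exponent vectors, and each surviving coefficient is the strictly positive product of the multinomial coefficient $\binom{\delta_1}{t_1,\dots,t_r}$ with the positive factorial ratios above. Beyond this I foresee no serious obstacle; the whole argument is careful bookkeeping with the floor/ceiling identity already encoded in equation~\eqref{AB}.
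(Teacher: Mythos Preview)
Your proposal is correct and follows essentially the same route as the paper: both arguments fix the residues $\beta_i=\alpha_i\bmod\delta_2$, parametrize the monomials in $A[\alpha_1,\dots,\alpha_r]$ and in the support defining $B[\alpha_1,\dots,\alpha_r]$ by nonnegative integer tuples with prescribed sum, and finish with stars-and-bars. The only cosmetic difference is that the paper introduces the complementary residues $\theta_i$ and writes $\zeta_i=m_i\delta_2+\theta_i$, whereas you substitute $u_i=t_i-\lceil\alpha_i/\delta_2\rceil$ directly; your explicit check that the surviving coefficients are strictly positive (hence no cancellation in the support) is a detail the paper leaves implicit.
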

\begin{proof}
Write $\alpha_i=\beta_i+s_i\delta_2$, where $0\leq\beta_i<\delta_2$.
Then
\begin{eqnarray*}
&&A[\alpha_1,\dots,\alpha_r]=\\
&&{\rm span}\{y_1^{\eta_1}\cdots y_r^{\eta_r}\in S^kV^*|\eta_i=n_i\delta_2+\beta_i{\rm\ for}\ n_i\geq0{\rm\ such\ that\ }n_1+\cdots+n_r=A(\alpha_1,\dots,\alpha_r)\}.
\end{eqnarray*}
So for each basis vector $y_1^{\eta_1}\cdots y_r^{\eta_r}\in A[\alpha_1,\dots,\alpha_r]$,
$y_1^{\eta_1}\cdots y_r^{\eta_r}=(y_1^{\beta_1}\cdots y_r^{\beta_r})(y_1^{n_1}\cdots y_r^{n_r})^{\delta_2}$
with $n_1+\cdots+n_r=A(\alpha_1,\dots,\alpha_r)$ and $n_i\geq0$.

Therefore  dim $A[\alpha_1,\dots,\alpha_r]=\binom{A(\alpha_1,\dots,\alpha_r)+r-1}{A(\alpha_1,\dots,\alpha_r)}$ .\\
Similarly let
\begin{eqnarray*}
\theta_i=\begin{cases}
\delta_2-\beta_i\ \beta_i\neq0\\
0\ \ \ \ \ \ \ \ \beta_i=0.
\end{cases}
\end{eqnarray*}
Let $x_1^{\zeta_1}\cdots x_r^{\zeta_r}\in B[\alpha_1,\dots,\alpha_r]$, then $\zeta_i=m_i\delta_2+\theta_i$ for some nonnegative integer $m_i$,
and
\begin{eqnarray*}
\sum_{i=1}^r(m_i\delta_2+\theta_i)&=&\delta_1\delta_2-k\\
&=&\delta_1\delta_2-\sum_{i=1}^r\alpha_i\\
&=&\delta_1\delta_2-\sum_{i=1}^r(\beta_i+s_i\delta_2)\\
&=&\delta_1\delta_2-A(\alpha_1,\dots,\alpha_r)\delta_2-\sum_{i=1}^r\beta_i\\
&=&(\delta_1-A(\alpha_1,\dots,\alpha_r)-\#\{\beta_i>0\})\delta_2+\sum_{i=1}^r\theta_i.
\end{eqnarray*}
So
\begin{eqnarray*}
m_1+\cdots+m_r=\delta_1-A(\alpha_1,\dots,\alpha_r)-\#\{\beta_i>0\}=B(\alpha_1,\dots,\alpha_r).
\end{eqnarray*}
Therefore
\begin{eqnarray*}
\begin{aligned}
&B[\alpha_1,\dots,\alpha_r]=\\
&{\rm span}\{x_1^{\zeta_1}\cdots x_r^{\zeta_r}\in S^{d-k}V|\zeta_i=m_i\delta_2+\theta_i{\rm\ for\ }m_i\geq0\ {\rm \ such\ that\ }m_1+\cdots+m_r=B(\alpha_1,\dots,\alpha_r)\}.
\end{aligned}
\end{eqnarray*}
So for each basis vector $x_1^{\zeta_1}\cdots x_r^{\zeta_r}\in B[\alpha_1,\dots,\alpha_r]$,
$x_1^{\zeta_1}\cdots x_r^{\zeta_r}=(x_1^{\theta_1}\cdots x_r^{\theta_r})(x_1^{m_1}\cdots x_r^{m_r})^{\delta_2}$
with $m_1+\cdots+m_r=B(\alpha_1,\dots,\alpha_r)$ and $m_i\geq0$.
Therefore dim $B[\alpha_1,\dots,\alpha_r]=\binom{B(\alpha_1,\dots,\alpha_r)+r-1}{B(\alpha_1,\dots,\alpha_r)}$.
\end{proof}
\begin{remark} Spaces $A[\alpha_1,\dots,\alpha_r]$ and $B[\alpha_1,\dots,\alpha_r]$ are essentially determined by $(\beta_1,\dots,\beta_r)$ with $0\leq \beta_i<\delta_2$ $(i=1,\dots,r)$.
\end{remark}

\begin{definition}
Define $\NUM(A,B)$ to be the number of subspaces $A[\alpha_1,\dots,\alpha_r]$ of $S^kV^*$ such that $A(\alpha_1,\dots,\alpha_r)=A$ and $B(\alpha_1,\dots,\alpha_r)=B$.
\end{definition}
One can decompose
\begin{eqnarray}
\label{decomp}S^kV^*=\bigoplus_{A=0}^{\lfloor\frac{k}{\delta_2}\rfloor}\bigoplus _{B\leq\delta_1-A}W(A,B)^{\oplus \NUM(A,B)}
\end{eqnarray}
such that $W(A,B)$ is a copy of subspace $A[\alpha_1,\dots,\alpha_r]$ in $S^kV^*$ with $A(\alpha_1,\dots,\alpha_r)=A$ and $B(\alpha_1,\dots,\alpha_r)=B$.
Furthermore, by Proposition \ref{class}, $P_{k,d-k}$ is a disjoint map with respect to this decomposition.
\begin{example}
Consider $\delta_1=\delta_2=r=k=2$, and the decomposition of $S^2V^*$.
One can see $A[2,0]=A[0,2]=\spa\{y_1^2,y_2^2\}$ with $A(2,0)=A(0,2)=0$ and $B(2,0)=B(0,2)=1$;
$A[1,1]=\spa\{y_1y_2\}$ with  $A(1,1)=B(1,1)=0$.
Therefore $\NUM(0,0)=\NUM(0,1)=1$, $ \NUM(A,B)=0$ otherwise,
and $$S^2V^*=W(0,1)\oplus W(0,0).$$
\end{example}
\begin{proposition}\label{NUMAB}
We have
$$\NUM(A,B)=\#\{(\beta_1,\dots,\beta_r)|\beta_1+\cdots +\beta_r=k-A\delta_2 ,\#\{\beta_i>0\}=\delta_1-B-A\ {\rm and}\ 0\leq\beta_i<\delta_2\ {\rm for}\ i=1,\dots,r\}.$$
Moreover the following bounds hold for the rank of $P_{k,d-k}$,
\begin{eqnarray}
\NUM(0,0)\leq \rank(P_{k,d-k})\leq\sum_{A=0}^{\lfloor\frac{k}{\delta_2}\rfloor}\sum_{B=0}^{\delta_1-A}\min\{\binom{A+r-1}{A},\binom{B+r-1}{B}\}\NUM(A,B).
\end{eqnarray}
\end{proposition}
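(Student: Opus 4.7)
\bigskip
\noindent\textbf{Proof proposal for Proposition \ref{NUMAB}.}

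The plan is to separate the claim into three pieces: (i) the counting formula for $\NUM(A,B)$, (ii) the upper bound on $\rank(P_{k,d-k})$, and (iii) the lower bound $\NUM(0,0)\le \rank(P_{k,d-k})$.

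For (i), I would start from the parameterization $\alpha_i=\beta_i+s_i\delta_2$ with $0\le \beta_i<\delta_2$ that is used to define $A[\alpha_1,\ldots,\alpha_r]$. By Proposition \ref{class}, two tuples $(\alpha_1,\ldots,\alpha_r)$ and $(\eta_1,\ldots,\eta_r)$ index the same subspace if and only if $\alpha_i-\eta_i$ is a multiple of $\delta_2$, i.e.\ if and only if they produce the same $(\beta_1,\ldots,\beta_r)$. Thus the subspaces $A[\alpha_1,\ldots,\alpha_r]\subset S^kV^*$ are in bijection with the $r$-tuples $(\beta_1,\ldots,\beta_r)$ arising this way. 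The constraint $\sum \alpha_i=k$ translates into $\sum\beta_i=k-A\delta_2$, and the formula \eqref{AB} shows that $\#\{\beta_i>0\}=\delta_1-A-B$. Counting such $(\beta_1,\ldots,\beta_r)$ with $0\le \beta_i<\delta_2$ gives exactly the asserted expression for $\NUM(A,B)$.

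For (ii), I would invoke decomposition \eqref{decomp} and the remark immediately after it that $P_{k,d-k}$ is disjoint with respect to this decomposition. Hence $\rank(P_{k,d-k})$ is the sum, over all $(A,B)$ and all $\NUM(A,B)$ copies of $W(A,B)$, of $\rank\bigl(P_{k,d-k}|_{W(A,B)}\bigr)$. Since this restriction factors through $W(A,B)\to B[\alpha_1,\ldots,\alpha_r]$, its rank is at most $\min\{\dim W(A,B),\dim B[\alpha_1,\ldots,\alpha_r]\}=\min\{\binom{A+r-1}{A},\binom{B+r-1}{B}\}$ by the dimension computation preceding the proposition. Summing yields the upper bound.

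For (iii), the key observation is that each block with $A=B=0$ is a map between one-dimensional spaces, so the contribution to $\rank(P_{k,d-k})$ from the $A=B=0$ blocks equals the number of such blocks on which the map is nonzero. So I need to check that $P_{k,d-k}(y_1^{\beta_1}\cdots y_r^{\beta_r})\ne 0$ whenever $0\le \beta_i<\delta_2$, $\sum\beta_i=k$, and $\#\{\beta_i>0\}=\delta_1$. Expanding $P=\sum_{t_1+\cdots+t_r=\delta_1}\binom{\delta_1}{t_1,\ldots,t_r}x_1^{t_1\delta_2}\cdots x_r^{t_r\delta_2}$ and applying the differential operator $y_1^{\beta_1}\cdots y_r^{\beta_r}$, a monomial survives only when $t_i\delta_2\ge \beta_i$ for all $i$; with the $\beta_i$ in the stated range this forces $t_i\ge 1$ exactly at the indices where $\beta_i>0$, of which there are $\delta_1$. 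Combined with $\sum t_i=\delta_1$, this leaves exactly one surviving multi-index, whose coefficient is a positive integer (a product of a multinomial coefficient and falling factorials). So $P_{k,d-k}|_{W(0,0)}$ is nonzero on every copy, contributing $\NUM(0,0)$ to the rank; by disjointness this gives the lower bound. The main (but mild) obstacle is this nonvanishing check; once it is in place, the rest is bookkeeping.
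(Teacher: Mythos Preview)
Your proposal is correct and follows essentially the same route as the paper: the bijection between subspaces and residue tuples $(\beta_1,\ldots,\beta_r)$ for part (i), the disjointness of \eqref{decomp} together with the dimension bounds for part (ii), and the one-dimensional block argument for part (iii). The only difference is that you supply an explicit nonvanishing check for the $A=B=0$ blocks (via the unique surviving monomial), whereas the paper simply asserts that this restriction is a nonzero map; your extra verification is a welcome detail but not a departure in method.
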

\begin{proof}
First let $A[\alpha_1,\dots,\alpha_r]$ be a subspace of $S^kV^*$ such that $A(\alpha_1,\dots,\alpha_r)=A$ and $B(\alpha_1,\dots,\alpha_r)=B$.
Write $\alpha_i=\beta_i+n_i\delta_2$, where $0\leq\beta_i<\delta_2$, then
\begin{eqnarray*}
k&=&\sum_{i=1}^rn_i\delta_2+\beta_i\\
&=&A\delta_2-\sum_{i=1}^r\beta_i
\end{eqnarray*}
Therefore $\beta_1+\cdots +\beta_r=k-A\delta_2$. By \eqref{AB}, $\#\{\beta_i>0\}=\delta_1-B-A\ and\ 0\leq\beta_i<\delta_2$.
On the other hand, since $(\beta_1,\dots,\beta_r)$ determine the class $A[\alpha_1,\dots,\alpha_r]$ uniquely,
$\NUM(A,B)=\#\{(\beta_1,\dots,\beta_r)|\beta_1+\cdots +\beta_r=k-A\delta_2 ,\#\{\beta_i>0\})=\delta_1-B-A\ {\rm and}\ 0\leq\beta_i<\delta_2\ {\rm for}\ i=1,\dots,r\}.$

Second, by \eqref{decomp} one can decompose $S^kV^*=\bigoplus_{A=0}^{\lfloor\frac{k}{\delta_2}\rfloor}\bigoplus _{B\leq\delta_1-A}W(A,B)^{\oplus \NUM(A,B)}$
such that $W(A,B)$ is a copy of subspace $A[\alpha_1,\dots,\alpha_r]$ in $S^kV^*$ with $A(\alpha_1,\dots,\alpha_r)=A$ and $B(\alpha_1,\dots,\alpha_r)=B$, and $P_{k,d-k}$ is a disjoint map with respect to the decomposition.
Since when $B<0$, $ P_{k,d-k}|_{W(A,B)}$ is the zero map,
\begin{eqnarray*}
\rank(P_{k,d-k})&=&\sum_{A=0}^{\lfloor\frac{k}{\delta_2}\rfloor}\sum_{B=0}^{\delta_1-A}\rank(P_{k,d-k}|W(A,B))\NUM(A,B)\\
&\leq&\sum_{A=0}^{\lfloor\frac{k}{\delta_2}\rfloor}\sum_{B=0}^{\delta_1-A}\min\{\binom{A+r-1}{A},\binom{B+r-1}{B}\}\NUM(A,B).
\end{eqnarray*}
On the other hand, consider subspaces
$A[\alpha_1,\dots,\alpha_r]$ with $A(\alpha_1,\dots,\alpha_r)=0$ and $B(\alpha_1,\dots,\alpha_r)$\\$=0$, the map
$P_{k,d-k}|_{A[\alpha_1,\dots,\alpha_r]}:A[\alpha_1,\dots,\alpha_r]\rightarrow B[\alpha_1,\dots,\alpha_r]$ is just a nonzero linear map from
a one-dimensional space to another one-dimensional space. Therefore
\begin{eqnarray*}
\rank(P_{k,d-k})&=&\sum_{A=0}^{\lfloor\frac{k}{\delta_2}\rfloor}\sum_{B=0}^{\delta_1-A}\rank(P_{k,d-k}|W(A,B))\NUM(A,B)\\
&\geq&\NUM(0,0).
\end{eqnarray*}
\end{proof}
\begin{corollary}\label{bounds}
Let $\delta_1\delta_2=d$ with $\delta_1,\delta_2\sim \sqrt{d}$  and  let $r\geq2\delta_1$, then
\begin{eqnarray}
\binom{r}{\delta_1}\leq \rank([(x_1^{\delta_2}+\cdots+ x_r^{\delta_2})^{\delta_1}]_{\lfloor\frac{d}{2}\rfloor,\lceil\frac{d}{2}\rceil})\leq \delta_1\binom{r}{\delta_1}\binom{\lfloor\frac{d}{2}\rfloor-1}{\delta_1-1}.
\end{eqnarray}
\end{corollary}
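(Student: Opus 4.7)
The plan is to apply Proposition~\ref{NUMAB} from both directions and carry out combinatorial estimates.

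For the lower bound, Proposition~\ref{NUMAB} gives $\rank(P_{k,d-k})\geq \NUM(0,0)$. The quantity $\NUM(0,0)$ counts tuples $(\beta_1,\ldots,\beta_r)$ with $\sum_i\beta_i=k=\lfloor d/2\rfloor$, $0\leq\beta_i<\delta_2$, and exactly $\delta_1$ of the entries nonzero. Since $\delta_1,\delta_2\sim\sqrt{d}$ forces $\delta_1\leq k\leq\delta_1(\delta_2-1)$, $k$ admits at least one composition into $\delta_1$ positive parts each less than $\delta_2$ (for instance, $\delta_1$ nearly-equal parts of size $\lfloor k/\delta_1\rfloor$ or $\lceil k/\delta_1\rceil$). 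Multiplying by the $\binom{r}{\delta_1}$ choices of which $\delta_1$ positions carry the nonzero values gives $\NUM(0,0)\geq\binom{r}{\delta_1}$.

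For the upper bound, apply the upper estimate in Proposition~\ref{NUMAB}. A stars-and-bars argument (dropping the constraint $\beta_i<\delta_2$) yields
\[
\NUM(A,B)\leq \binom{r}{\delta_1-A-B}\binom{k-A\delta_2-1}{\delta_1-A-B-1},
\]
and the hypothesis $r\geq 2\delta_1$ makes $\binom{r}{j}$ monotone on $[0,\delta_1]$, so $\binom{r}{\delta_1-A-B}\leq\binom{r}{\delta_1}$. The $A=0$ terms all have $\min=1$; substituting $j=\delta_1-B$, their total contribution is at most $\sum_{j=1}^{\delta_1}\binom{r}{j}\binom{k-1}{j-1}$. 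Since $k=\lfloor d/2\rfloor$ makes $\delta_1-1\leq(k-1)/2$ for $d$ large, $\binom{k-1}{j-1}\leq\binom{k-1}{\delta_1-1}$ on $[1,\delta_1]$, and combined with $\binom{r}{j}\leq\binom{r}{\delta_1}$ this yields the $A=0$ bound $\delta_1\binom{r}{\delta_1}\binom{k-1}{\delta_1-1}$.

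The main obstacle is showing the $A\geq 1$ (and analogous $B\geq 1$) contributions fit within the same budget $\delta_1\binom{r}{\delta_1}\binom{k-1}{\delta_1-1}$. The key observation is that although $\min\{\binom{A+r-1}{A},\binom{B+r-1}{B}\}$ can grow polynomially in $r$, the companion factor $\binom{k-A\delta_2-1}{\delta_1-A-B-1}$ shrinks by a factor of order $(\delta_1/k)^{A}\sim(2/\delta_1)^{A}$ per unit of $A$, since each unit of $A$ removes $\delta_2$ from the top and $1$ from the bottom of the binomial coefficient. Because the loose $A=0$ bound $\delta_1\binom{r}{\delta_1}\binom{k-1}{\delta_1-1}$ is itself far larger than the actual $A=0$ sum (dominated by its last term, hence $O\!\left(\binom{r}{\delta_1}\binom{k-1}{\delta_1-1}\right)$), there is ample slack to absorb the geometrically decaying $A\geq 1$ contributions into the stated upper bound.
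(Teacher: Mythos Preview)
Your lower bound argument is correct and is exactly the paper's argument: both invoke $\rank(P_{k,d-k})\ge \NUM(0,0)$ from Proposition~\ref{NUMAB}, pull out the factor $\binom{r}{\delta_1}$ for the choice of support, and note that at least one admissible composition of $k$ into $\delta_1$ parts in $\{1,\dots,\delta_2-1\}$ exists.

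For the upper bound, however, there is a genuine gap, and your route is not the one the paper takes. You feed the upper estimate of Proposition~\ref{NUMAB} (the double sum with the $\min$ factors) and then have to control every layer $(A,B)$. You handle $A=0$ cleanly, arriving at $\sum_{j=1}^{\delta_1}\binom{r}{j}\binom{k-1}{j-1}$ and then at $\delta_1\binom{r}{\delta_1}\binom{k-1}{\delta_1-1}$; but for $A\ge 1$ you only offer a heuristic about geometric decay of $\binom{k-A\delta_2-1}{\delta_1-A-B-1}$ absorbing the growth of the $\min$ factor. That is not a proof: the $\min$ factor can be as large as $\binom{A+r-1}{A}$, and since $r$ is allowed to be arbitrarily large relative to $d$, the required cancellation is not automatic and needs honest estimates you have not supplied. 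The phrase ``ample slack'' does not substitute for an inequality.

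The paper avoids this entirely by \emph{not} going through the upper bound of Proposition~\ref{NUMAB}. Instead it observes directly that every monomial of $P=(x_1^{\delta_2}+\cdots+x_r^{\delta_2})^{\delta_1}$ involves at most $\delta_1$ of the variables $x_i$, so any $y_1^{\alpha_1}\cdots y_r^{\alpha_r}\in S^kV^*$ with more than $\delta_1$ of the $\alpha_i$ positive annihilates $P$. Hence $\rank(P_{k,d-k})$ is at most the number of degree-$k$ monomials supported on at most $\delta_1$ variables, namely $\sum_{s=1}^{\delta_1}\binom{r}{s}\binom{k-1}{s-1}$, and then each summand is bounded by $\binom{r}{\delta_1}\binom{k-1}{\delta_1-1}$ using $r\ge 2\delta_1$ and $k\ge 2\delta_1-1$. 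This is a one-line idea and produces exactly the stated bound with no case analysis over $(A,B)$. Incidentally, your detour can be short-circuited the same way: replace $\min\{\binom{A+r-1}{A},\binom{B+r-1}{B}\}$ by $\binom{A+r-1}{A}$, and then $\sum_{A,B\ge 0}\binom{A+r-1}{A}\NUM(A,B)$ is precisely the count of non-annihilating basis monomials, which is bounded by the same $\sum_{s=1}^{\delta_1}\binom{r}{s}\binom{k-1}{s-1}$.
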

\begin{proof}
Write $k=\lfloor\frac{d}{2}\rfloor$ and $P=(x_1^{\delta_2}+\cdots+ x_r^{\delta_2})^{\delta_1}$, by Proposition \ref{NUMAB},
\begin{eqnarray*}
\rank(P_{k,d-k})&\geq&\NUM(0,0)\\
&=&\#\{(\beta_1,\dots,\beta_r)|\beta_1+\cdots +\beta_r=k ,\#\{\beta_i>0\})=\delta_1\ \\
&&{\rm and}\ 0\leq\beta_i<\delta_2\ {\rm for}\ i=1,\dots,r\}\\
&=&\binom{r}{\delta_1}\#\{(\beta_1,\dots,\beta_{\delta_1})|\beta_1+\cdots +\beta_{\delta_1}=k ,\ 0<\beta_i<\delta_2\ {\rm for}\ i=1,\dots,\delta_1\}\\
&\geq&\binom{r}{\delta_1}.
\end{eqnarray*}\\
To prove the second inequality, notice that each monomial of $P=(x_{1}^{\delta_2}+\cdots+x_{r}^{\delta_2})^{\delta_1} $ is of the form $x_{i_1}^{\delta_2}\cdots x_{i_{\delta_1}}^{\delta_2}$ for $1\leq i_1\leq\cdots i_r\leq r$. Let $y_1^{\alpha_1}\cdots y_r^{\alpha_r}\in S^k{V^*}$, if the number of positive $\alpha_i$  is greater than $\delta_1$, then each monomial of $P$  vanishes  when we take partial derivatives, i.e. $x_{i_1}^{\delta_2}\cdots x_{i_{\delta_1}}^{\delta_2}(y_1^{\alpha_1}\cdots y_r^{\alpha_r})=0$, so $P_{k,d-k}(y_1^{\alpha_1}\cdots y_r^{\alpha_r})=0$ under this condition. Therefore
\begin{eqnarray*}
\rank(P_{k,d-k})\leq \#\{(\alpha_1,\dots,\alpha_r)|\ {\rm where}\ \alpha_1+\cdots+\alpha_r=k\ {\rm and}\
 \#\{\alpha_i>0\}\leq\delta_1\}.
\end{eqnarray*}
 So
 \begin{eqnarray*}
\rank(P_{k,d-k})&\leq&\sum_{s=1}^{\delta_1}\binom{r}{s}\binom{k-1}{s-1}\\
&\leq&\delta_1\binom{r}{\delta_1}\binom{k-1}{\delta_1-1}\\
\end{eqnarray*}
\end{proof}

\begin{remark}
While we can get better bounds for $\rank(P_{k,d-k})$, it is always the case that $r$ is much bigger than $d=\delta_1\delta_2$,
therefore the term $\binom{r}{\delta_1}$ dominates.
\end{remark}
\subsection{Comparison with the permanent}\label{dpcom}
Let $\{x_1,...,x_{n^2}\}$ be a basis of $V$ and $r\geq n$, and let $[P]=[(l_{1}^{\delta_2}+\cdots+l_{r}^{\delta_2})^{\delta_1}]\in v_{\delta_1}(\sigma_r(v_{\delta_2}(\mathbb{P}V)))\subset \mathbb{P}S^nV$ be generic, where $\delta_1,\delta_2 \sim \sqrt{n}$, $\delta_1\delta_2=n$. I compute the rank of the flattening $P_{\lfloor\frac{n}{2}\rfloor,\lceil\frac{n}{2}\rceil}:S^{\lfloor\frac{n}{2}\rfloor}V^*\rightarrow S^{\lceil\frac{n}{2}\rceil}V$ and compare it with $\rank((\perm_n)_{\lfloor\frac{n}{2}\rfloor,\lceil\frac{n}{2}\rceil})$.

Classical results show for any $0\leq k\leq\lfloor\frac{n}{2}\rfloor$,
$\rank(\perm_n)_{k,n-k}=\binom{n}{k}^2$.
\begin{lemma}\label{proj}
Let $\tilde{A},\tilde{B}$ be two complex vector spaces, let $A$ be a subspace of $\tilde{A}$ and $B$ be a subspace of $\tilde{B}$.
If $\widetilde{T}\in \tilde{A}\otimes\tilde{B}$ and $T\in A\otimes B$ is the linear projection of $\tilde{T}$, then
$R(\tilde{T})\geq R(T)$.
\end{lemma}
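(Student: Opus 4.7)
The plan is to exhibit, from any rank decomposition of $\widetilde{T}$, a decomposition of $T$ with no more terms. Since tensor rank is defined as the minimum length of such a decomposition, the inequality will follow immediately.

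First, I would fix the meaning of ``linear projection''. Choosing complementary subspaces $\tilde{A} = A \oplus A'$ and $\tilde{B} = B \oplus B'$ gives splitting maps $\pi_A \colon \tilde{A} \to A$ and $\pi_B \colon \tilde{B} \to B$, and the induced projection $\pi_A \otimes \pi_B \colon \tilde{A} \otimes \tilde{B} \to A \otimes B$ is precisely the linear projection sending $\widetilde{T}$ to $T$ under the decomposition
\begin{equation*}
\tilde{A} \otimes \tilde{B} = (A \otimes B) \oplus (A \otimes B') \oplus (A' \otimes B) \oplus (A' \otimes B').
\end{equation*}

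Next, let $r = R(\widetilde{T})$ and write a minimal decomposition
\begin{equation*}
\widetilde{T} = \sum_{i=1}^{r} \tilde{a}_i \otimes \tilde{b}_i, \qquad \tilde{a}_i \in \tilde{A}, \; \tilde{b}_i \in \tilde{B}.
\end{equation*}
Applying the linear map $\pi_A \otimes \pi_B$ term by term gives
\begin{equation*}
T = (\pi_A \otimes \pi_B)(\widetilde{T}) = \sum_{i=1}^{r} \pi_A(\tilde{a}_i) \otimes \pi_B(\tilde{b}_i),
\end{equation*}
which exhibits $T$ as a sum of at most $r$ rank-one tensors in $A \otimes B$ (some summands may vanish if either projection kills the corresponding factor). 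Hence $R(T) \leq r = R(\widetilde{T})$.

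There is essentially no obstacle here; the only thing to be careful about is that the bilinearity of the tensor product guarantees $\pi_A \otimes \pi_B$ really is a well-defined linear map and that it sends rank-one tensors to rank-one (or zero) tensors, which is what makes the argument work. The same reasoning would, in fact, show that any linear map of the form $f \otimes g$ can only weakly decrease tensor rank, and the projection case is just the specialization where $f$ and $g$ are themselves projections onto subspaces.
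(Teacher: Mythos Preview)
Your argument is correct. The paper itself states this lemma without proof, treating it as a standard fact, so there is no proof in the paper to compare against. Your approach---pushing a minimal rank decomposition of $\widetilde{T}$ through the projection $\pi_A\otimes\pi_B$ to obtain a decomposition of $T$ with at most as many terms---is exactly the canonical justification. Note that since the tensors here live in a product of only two spaces, $R$ coincides with ordinary matrix rank, and one could equivalently argue that $T$, viewed as a linear map $A^*\to B$, factors as $\pi_B\circ\widetilde{T}\circ\pi_A^{\,*}$, so its rank is bounded by that of $\widetilde{T}$; your tensor-decomposition argument and this linear-algebra argument are two faces of the same coin.
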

\begin{corollary}\label{bounds1}
Assume that $\delta_1,\delta_2 \sim \sqrt{n}$ and $\delta_1\delta_2=n$, then
$$\rank([(l_{1}^{\delta_2}+\cdots+l_{r}^{\delta_2})^{\delta_1}]_{\lfloor\frac{n}{2}\rfloor,\lceil\frac{n}{2}\rceil})\leq \delta_1\binom{r}{\delta_1}\binom{\lfloor\frac{n}{2}\rfloor-1}{\delta_1-1}\leq(r\lfloor\frac{n}{2}\rfloor)^{\delta_1}$$.
\end{corollary}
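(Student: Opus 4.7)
The plan is to reduce to Corollary \ref{bounds} by pulling the generic polynomial back to an auxiliary ambient space in which the $r$ inner summands form a genuine basis. Introduce $\tilde V = \mathbb{C}^r$ with basis $\tilde x_1,\dots,\tilde x_r$, the model polynomial
\[
\tilde P = (\tilde x_1^{\delta_2}+\cdots+\tilde x_r^{\delta_2})^{\delta_1}\in S^n\tilde V,
\]
and the linear map $\pi:\tilde V\to V$ defined by $\pi(\tilde x_i)=l_i$. Then the induced map on symmetric powers satisfies $\pi_*\tilde P = P$.

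The main technical step is to establish the factorization
\[
P_{k,n-k} \;=\; \pi_* \circ \tilde P_{k,n-k} \circ \pi^*,\qquad k=\lfloor n/2\rfloor,
\]
as linear maps $S^kV^*\to S^{n-k}V$, where $\pi^*:S^kV^*\to S^k\tilde V^*$ and $\pi_*:S^{n-k}\tilde V\to S^{n-k}V$ are the naturally induced maps. By linearity this reduces to the single-derivative identity $\phi\cdot\pi_*(\tilde v^m) = \pi_*((\pi^*\phi)\cdot\tilde v^m)$ for $\phi\in V^*$ and $\tilde v\in\tilde V$, which is a one-line check once both sides are unpacked (both equal $m\,(\pi^*\phi)(\tilde v)\,(\pi\tilde v)^{m-1}$); iterating $k$ times extends the formula to general $\phi\in S^kV^*$. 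Since the rank of a composition is bounded by the rank of its middle factor, we get $\rank(P_{k,n-k})\leq\rank(\tilde P_{k,n-k})$, and Corollary \ref{bounds} applied to $\tilde P$ with $d=n$ yields the first inequality. The hypothesis $r\geq 2\delta_1$ of Corollary \ref{bounds} is guaranteed by $r\geq n$ together with $\delta_1\sim\sqrt n$ for $n$ large.

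The second inequality is a routine estimate: bounding $\binom{r}{\delta_1}\leq r^{\delta_1}$ and $\binom{\lfloor n/2\rfloor-1}{\delta_1-1}\leq \lfloor n/2\rfloor^{\delta_1-1}$, and using $\delta_1\leq\lfloor n/2\rfloor$ (valid since $\delta_1\sim\sqrt n$ for $n$ large), one obtains
\[
\delta_1\binom{r}{\delta_1}\binom{\lfloor n/2\rfloor-1}{\delta_1-1}
\;\leq\; \delta_1\, r^{\delta_1}\, \lfloor n/2\rfloor^{\delta_1-1}
\;\leq\; r^{\delta_1}\,\lfloor n/2\rfloor^{\delta_1}
\;=\; (r\lfloor n/2\rfloor)^{\delta_1}.
\]

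The only genuinely substantive step is the factorization identity, and the mild subtlety there is that $\pi$ is typically neither injective (when $r>n^2$) nor surjective (when $r<n^2$). The composition-of-induced-maps formulation handles both cases uniformly, so Lemma \ref{proj} is not strictly required; alternatively, one could identify $P_{k,n-k}$ with the image of $\tilde P_{k,n-k}$ under $\pi_*\otimes\pi_*$ and invoke Lemma \ref{proj} after a suitable splitting. Either way, once the factorization is in hand, the remaining steps are purely mechanical.
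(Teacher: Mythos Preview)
Your proof is correct and follows essentially the same approach as the paper: pass to an auxiliary space in which the $l_i$ act as basis vectors, apply Corollary~\ref{bounds} there, and transfer the rank bound back to $V$. The paper does this by assuming the $l_i$ are linearly independent, setting $W=\operatorname{span}\{l_1,\dots,l_r\}\subset V$, and invoking Lemma~\ref{proj}, whereas your factorization $P_{k,n-k}=\pi_*\circ\tilde P_{k,n-k}\circ\pi^*$ through $\tilde V=\mathbb{C}^r$ packages the same idea without the linear-independence hypothesis; the remaining steps are identical.
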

\begin{proof}
Write $k=\lfloor\frac{n}{2}\rfloor$ and $P=(l_{1}^{\delta_2}+\cdots+l_{r}^{\delta_2})^{\delta_1}$. Assume that ${l_1,...,l_r}$ are linearly independent and let $ W=$span$\{l_1,..., l_r\}$, in  Corollary \ref{bounds} we already computed the rank of the new flattening ${\tilde{P}_{k,n-k}:S^kW^*\rightarrow S^{n-k}W}$. By Lemma \ref{proj} $\rank(P_{k,n-k})\leq \rank(\tilde{P}_{k,n-k})$.\\
By Corollary \ref{bounds},
 \begin{eqnarray*}
 \rank(P_{k,n-k})&\leq &\rank(\tilde{P}_{k,n-k})\\
 &\leq&\delta_1\binom{r}{\delta_1}\binom{k-1}{\delta_1-1}\\
&\leq&(rk)^{\delta_1}
\end{eqnarray*}
\end{proof}

\begin{proof}[Proof of Theorem \ref{permcom}]
Let $k=\lfloor\frac{n}{2}\rfloor$, and $2n-\sqrt{n}\tlog(r)=\sqrt{n}\tlog(n)\omega(1)$, by Corollary \ref{bounds1},
\begin{eqnarray*}
  \frac{\rank((\perm_n)_{k,n-k})}{ \rank(P_{k,n-k})}&\geq &\frac{\binom{n}{\lfloor\frac{n}{2}\rfloor}^2}{(rk)^{\delta_1}}\\
 &\sim &\frac{2^{2n}}{n2^{\sqrt{n}\tlog(rn/2)}}\\
 &=&2^{2n-\sqrt{n}\tlog(r)-\sqrt{n}\tlog(n/2)-\tlog(n)}\\
  &=&2^{\sqrt{n}\tlog(n)\omega(1)-\sqrt{n}\tlog(n/2)-\tlog(n)}\\
 &=& 2^{\sqrt{n}\tlog(n)\omega(1)}.
\end{eqnarray*}
\end{proof}

\bibliographystyle{amsplain}
\bibliography{Lmatrix}

\def\cdprime{$''$} \def\cprime{$'$} \def\cprime{$'$} \def\cprime{$'$}
  \def\Dbar{\leavevmode\lower.6ex\hbox to 0pt{\hskip-.23ex \accent"16\hss}D}
  \def\cprime{$'$} \def\cprime{$'$} \def\cdprime{$''$} \def\cprime{$'$}
  \def\cprime{$'$} \def\Dbar{\leavevmode\lower.6ex\hbox to 0pt{\hskip-.23ex
  \accent"16\hss}D} \def\cprime{$'$} \def\cprime{$'$} \def\cprime{$'$}
  \def\cprime{$'$} \def\Dbar{\leavevmode\lower.6ex\hbox to 0pt{\hskip-.23ex
  \accent"16\hss}D} \def\cprime{$'$} \def\cprime{$'$}
\providecommand{\bysame}{\leavevmode\hbox to3em{\hrulefill}\thinspace}
\providecommand{\MR}{\relax\ifhmode\unskip\space\fi MR }
\providecommand{\MRhref}[2]{%
  \href{http://www.ams.org/mathscinet-getitem?mr=#1}{#2}
}
\providecommand{\href}[2]{#2}
\begin{thebibliography}{10}

\bibitem{MR2664658}
Emmanuel Briand, \emph{Covariants vanishing on totally decomposable forms},
  Liaison, {S}chottky problem and invariant theory, Progr. Math., vol. 280,
  Birkh\"auser Verlag, Basel, 2010, pp.~237--256. \MR{2664658}

\bibitem{MR1243152}
Michel Brion, \emph{Stable properties of plethysm: on two conjectures of
  {F}oulkes}, Manuscripta Math. \textbf{80} (1993), no.~4, 347--371.
  \MR{MR1243152 (95c:20056)}

\bibitem{MR1601139}
\bysame, \emph{Sur certains modules gradu\'es associ\'es aux produits
  sym\'etriques}, Alg\`ebre non commutative, groupes quantiques et invariants
  ({R}eims, 1995), S\'emin. Congr., vol.~2, Soc. Math. France, Paris, 1997,
  pp.~157--183. \MR{1601139 (99e:20054)}

\bibitem{MR2310544}
M.~V. Catalisano, A.~V. Geramita, and A.~Gimigliano, \emph{Segre-{V}eronese
  embeddings of {$\Bbb P^1\times\Bbb P^1\times\Bbb P^1$} and their secant
  varieties}, Collect. Math. \textbf{58} (2007), no.~1, 1--24. \MR{2310544
  (2008f:14069)}

\bibitem{MR2901512}
Xi~Chen, Neeraj Kayal, and Avi Wigderson, \emph{Partial derivatives in
  arithmetic complexity and beyond}, Found. Trends Theor. Comput. Sci.
  \textbf{6} (2010), no.~1-2, front matter, 1--138 (2011). \MR{2901512}

\bibitem{DBLP:journals/corr/EfremenkoLSW15}
Klim Efremenko, J.~M. Landsberg, Hal Schenck, and Jerzy Weyman, \emph{On
  minimal free resolutions and the method of shifted partial derivatives in
  complexity theory}, CoRR \textbf{abs/1504.05171} (2015).

\bibitem{MR3427655}
Cameron Farnsworth, \emph{Koszul-{Y}oung flattenings and symmetric border rank
  of the determinant}, J. Algebra \textbf{447} (2016), 664--676. \MR{3427655}

\bibitem{MR0037276}
H.~O. Foulkes, \emph{Concomitants of the quintic and sextic up to degree four
  in the coefficients of the ground form}, J. London Math. Soc. \textbf{25}
  (1950), 205--209. \MR{MR0037276 (12,236e)}

\bibitem{FH}
William Fulton and Joe Harris, \emph{Representation theory}, Graduate Texts in
  Mathematics, vol. 129, Springer-Verlag, New York, 1991, A first course,
  Readings in Mathematics. \MR{1153249 (93a:20069)}

\bibitem{gkz}
I.~M. Gel{\cprime}fand, M.~M. Kapranov, and A.~V. Zelevinsky,
  \emph{Discriminants, resultants, and multidimensional determinants},
  Mathematics: Theory \& Applications, Birkh\"auser Boston Inc., Boston, MA,
  1994. \MR{95e:14045}

\bibitem{Gordon}
P.~Gordon, \emph{Das zerfallen der curven in gerade linien}, Math. Ann. (1894),
  no.~45, 410--427.

\bibitem{MR0480472}
Edward~L. Green, \emph{Complete intersections and {G}orenstein ideals}, J.
  Algebra \textbf{52} (1978), no.~1, 264--273. \MR{0480472}

\bibitem{2015arXiv150802293G}
Y.~{Guan}, \emph{{Brill's equations as a GL(V)-module}}, arXiv1508.02293
  (2015).

\bibitem{gupta4}
Ankit Gupta, Pritish Kamath, Neeraj Kayal, and Ramprasad Saptharishi,
  \emph{Approaching the chasm at depth four}, Proceedings of the Conference on
  Computational Complexity (CCC) (2013).

\bibitem{DBLP:journals/eccc/GuptaKKS13}
\bysame, \emph{Arithmetic circuits: A chasm at depth three}, Electronic
  Colloquium on Computational Complexity (ECCC) \textbf{20} (2013), 26.

\bibitem{MR1554881}
J.~Hadamard, \emph{M\'emoire sur l'\'elimination}, Acta Math. \textbf{20}
  (1897), no.~1, 201--238. \MR{1554881}

\bibitem{MR1504330}
\bysame, \emph{Sur les conditions de d\'ecomposition des formes}, Bull. Soc.
  Math. France \textbf{27} (1899), 34--47. \MR{1504330}

\bibitem{hermite}
C.~Hermite, \emph{Sur la theorie des fonctions homogenes a deux indeterminees},
  Cambridge and Dublin Math. J. \textbf{9} (1854), 172--217.

\bibitem{MR983608}
Roger Howe, \emph{{$({\rm GL}_n,{\rm GL}_m)$}-duality and symmetric plethysm},
  Proc. Indian Acad. Sci. Math. Sci. \textbf{97} (1987), no.~1-3, 85--109
  (1988). \MR{MR983608 (90b:22020)}

\bibitem{MR515043}
A.~Iarrobino and J.~Emsalem, \emph{Some zero-dimensional generic singularities;
  finite algebras having small tangent space}, Compositio Math. \textbf{36}
  (1978), no.~2, 145--188. \MR{515043}

\bibitem{MR1735271}
Anthony Iarrobino and Vassil Kanev, \emph{Power sums, {G}orenstein algebras,
  and determinantal loci}, Lecture Notes in Mathematics, vol. 1721,
  Springer-Verlag, Berlin, 1999, Appendix C by Iarrobino and Steven L. Kleiman.
  \MR{MR1735271 (2001d:14056)}

\bibitem{MR3169697}
Harlan Kadish and J.~M. Landsberg, \emph{Padded polynomials, their cousins, and
  geometric complexity theory}, Comm. Algebra \textbf{42} (2014), no.~5,
  2171--2180. \MR{3169697}

\bibitem{MR2865915}
J.~M. Landsberg, \emph{Tensors: geometry and applications}, Graduate Studies in
  Mathematics, vol. 128, American Mathematical Society, Providence, RI, 2012.
  \MR{2865915}

\bibitem{MR3343444}
\bysame, \emph{Geometric complexity theory: an introduction for geometers},
  Ann. Univ. Ferrara Sez. VII Sci. Mat. \textbf{61} (2015), no.~1, 65--117.
  \MR{3343444}

\bibitem{LMsec}
J.~M. Landsberg and Laurent Manivel, \emph{On the ideals of secant varieties of
  {S}egre varieties}, Found. Comput. Math. \textbf{4} (2004), no.~4, 397--422.
  \MR{MR2097214 (2005m:14101)}

\bibitem{MR3081636}
J.~M. Landsberg and Giorgio Ottaviani, \emph{Equations for secant varieties of
  {V}eronese and other varieties}, Ann. Mat. Pura Appl. (4) \textbf{192}
  (2013), no.~4, 569--606. \MR{3081636}

\bibitem{LWsecseg}
J.~M. Landsberg and Jerzy Weyman, \emph{On the ideals and singularities of
  secant varieties of {S}egre varieties}, Bull. Lond. Math. Soc. \textbf{39}
  (2007), no.~4, 685--697. \MR{MR2346950}

\bibitem{2016arXiv160108229L}
J.M. {Landsberg} and M.~{Michalek}, \emph{{On the geometry of border rank
  algorithms for matrix multiplication and other tensors with symmetry}}, ArXiv
  e-prints (2016).

\bibitem{v011a011}
Joseph~M. Landsberg and Giorgio Ottaviani, \emph{New lower bounds for the
  border rank of matrix multiplication}, Theory of Computing \textbf{11}
  (2015), no.~11, 285--298.

\bibitem{MR1651092}
Laurent Manivel, \emph{Gaussian maps and plethysm}, Algebraic geometry
  ({C}atania, 1993/{B}arcelona, 1994), Lecture Notes in Pure and Appl. Math.,
  vol. 200, Dekker, New York, 1998, pp.~91--117. \MR{MR1651092 (99h:20070)}

\bibitem{MR2842085}
Kristian Ranestad and Frank-Olaf Schreyer, \emph{On the rank of a symmetric
  form}, J. Algebra \textbf{346} (2011), 340--342. \MR{2842085 (2012j:13037)}

\bibitem{vali:79-3}
Leslie~G. Valiant, \emph{Completeness classes in algebra}, Proc.~11th ACM STOC,
  1979, pp.~249--261.

\end{thebibliography}
\end{document}